\documentclass{article}
\usepackage{booktabs}
\usepackage[utf8]{inputenc}
\usepackage[table]{xcolor}
\usepackage{float} 

\usepackage[numbers]{natbib}
\usepackage{arxiv}
\usepackage{float}
\usepackage[utf8]{inputenc} 
\usepackage[T1]{fontenc}    
\usepackage{hyperref}       
\usepackage[table]{xcolor} 
\usepackage{multirow}
\usepackage{url}            
\usepackage{booktabs}       
\usepackage{amsfonts}       
\usepackage{nicefrac}       
\usepackage{microtype}      
\usepackage{lipsum}
\usepackage{graphicx}
\usepackage{amsmath}
\usepackage{amsthm}
\usepackage{amsthm, amsmath, amssymb}
\usepackage{hyperref}
\usepackage{mathrsfs}
\usepackage{graphicx}
\usepackage{comment}
\usepackage[normalem]{ulem}
\usepackage{booktabs}  
\usepackage{graphicx}  

\usepackage{subcaption}

\newtheorem{theorem}{Theorem}[section]
\newtheorem{lemma}[theorem]{Lemma}

\newtheorem{example}[theorem]{Example}

\theoremstyle{definition}  

\newtheorem{Definition}[theorem]{Definition}

\newtheorem{remark}{Remark}[section]

\graphicspath{ {./images/} }

\usepackage{xcolor}
\everymath{\displaystyle}

\title{Block Schwarz methods and preconditioning strategies using Generalized locally Toeplitz tools - part I: analysis of the preconditioners and numerical validation}

\author{
 A. Rifqui\\
  \\
  \texttt{abdessadek.rifqui@um6p.ma} \\
   \And
 A. Ratnani \\
 \\
  \texttt{ahmed.ratnani@um6p.ma} \\
  \And
S. Serra-Capizzano\\
 \\
  \texttt{s.serracapizzano@uninsubria.it} \\
}

\begin{document}
\maketitle
\begin{abstract}
In the current work we present a spectral analysis of the additive and multiplicative Schwarz methods within the framework of domain decomposition techniques, by investigating the spectral properties of these classical Schwarz preconditioning matrix-sequences, with emphasis on their convergence behavior and on the effect of transmission operators. In particular, after a general presentation of various options, we focus on restricted variants of the Schwarz methods aimed at improving parallel efficiency, while preserving their convergence features. In order to rigorously describe and analyze the convergence behavior, we employ the theory of generalized locally Toeplitz (GLT) sequences, which provides a robust framework for studying the asymptotic spectral distribution of the discretized operators arising from Schwarz iterations. By associating each operator sequence with the appropriate GLT symbol, we derive explicit expressions for the GLT symbols of the convergence factors, for both additive and multiplicative Schwarz methods. The GLT-based spectral approach offers a unified and systematic understanding of how the spectrum evolves with mesh refinement and overlap size (in the algebraic case). Our analysis not only deepens the theoretical understanding of classical Schwarz methods, but also establishes a foundation for examining future restricted or hybrid Schwarz variants using GLT symbolic spectral tools. Numerical experiments are presented, while, based on the study in the current work, the analysis of preconditioned matrix-sequences and proposals of new Schwarz preconditioners are given in a twin paper, ideally part II of the present work.

\end{abstract}

\noindent\textbf{Keywords:}
 Isogeometric analysis, domain decomposition, Krylov methods, spectral and singular value distribution, generalized locally Toeplitz (GLT) sequences, GLT symbol, block Jacobi and Gauss-Seidel, block additive, block multiplicative, block restricted additive, block restricted multiplicative, Schwarz methods.

\section{Introduction}

Standard finite difference and finite element discretizations of partial differential equations (PDEs) posed on $d$-dimensional domains, with $d\ge 1$, typically yield $d$-level banded matrices, including $d$-level tridiagonal and pentadiagonal structures. In Isogeometric Analysis (IgA)~\cite{hughes2005}, the resulting matrices retain a $d$-level structure, but their bandwidth generally grows as a consequence of the higher continuity of the spline basis functions. The associated linear systems are therefore large, highly structured, and spectrally nontrivial. In particular, as the spline degree $p$ increases under $p$-refinement, the number of nonzero entries per row grows at each of the $d$ levels, resulting in a substantial increase in the computational cost of the corresponding linear algebra. The design of robust and scalable solvers for high-degree spline discretizations is accordingly a central issue, and has motivated a substantial body of work on efficient iterative methods~\cite{Donatelli2017, Buffa2017, Sangalli2016, Collieretall,Takacs2015}. Analogous considerations apply to high-order finite element discretizations, where, although the bandwidth may remain fixed, scalar entries are replaced by blocks whose size depends on the polynomial degree and on the spatial dimension $d$, leading to a comparable increase in structural complexity~\cite{GSS,RymaNLAA}.

Schwarz methods form a fundamental class of domain decomposition techniques for the numerical solution of PDEs. Their underlying principle is to partition the computational domain into subdomains and to solve local problems, either exactly or approximately, on each subdomain. In the classical Schwarz method, Dirichlet conditions are imposed on the artificial interfaces separating neighboring subdomains~\cite{Mathew2008,Smith1996,Toselli2005,quarteroni1999domain}. Algebraic interpretations of Schwarz methods have played a significant role in clarifying their structure and in motivating several extensions~\cite{Tang1992,Griebel1995,Frommer1999,Benzi2001}. From an algebraic standpoint, in particular, additive and multiplicative Schwarz methods are closely related to block Jacobi and block Gauss--Seidel iterations, respectively, with the additional feature of overlap between subdomains.

The convergence analysis of the corresponding iterative methods nonetheless presents several difficulties. First, the matrices involved are typically of large size. More significantly, robustness with respect to the problem and discretization parameters requires a parametric analysis, at least with respect to the discretization parameter $h=h_n$ (or, in the multilevel setting, the corresponding mesh parameters), where $h_n\to0$ as increasingly accurate approximations are sought. From a matrix-theoretic perspective, the limit $h_n\to0$ corresponds to the matrix dimension $d_n$ tending to infinity. It is therefore natural to consider not a single matrix $A_n$ but the entire matrix-sequence $\{A_n\}_n$, where $A_n$ has size $d_n\times d_n$ and $d_n\to\infty$ as $n\to\infty$. This places the analysis naturally within the framework of asymptotic linear algebra~\cite{BG-book,G.Barbarino}, with related convergence results of (preconditioned) conjugate gradients based on asymptotic spectral distribution via potential theory \cite{superlinear-potential1,superlinear-potential2}.

For PDE discretizations with constant coefficients on Cartesian domains, uniform meshes, and Dirichlet boundary conditions, one typically obtains $d$-level Toeplitz structures. Block structures may likewise arise, for instance, in high-order finite elements, Isogeometric Analysis with intermediate regularity, and discontinuous Galerkin methods; see~\cite{Barbarino2020,Barbarino2020-bis} and the references therein. These settings, however, are special cases: violating any of the assumptions above generally destroys the global Toeplitz structure of the coefficient matrices. In particular, changes in boundary conditions can often be interpreted as asymptotically low-rank perturbations of the underlying Toeplitz structure, whereas variable coefficients, nonuniform meshes, or more general geometries typically give rise to matrix-sequences that are globally non-Toeplitz but retain a hidden local Toeplitz structure.

The theory of generalized locally Toeplitz (GLT) sequences provides a unified framework for describing and analyzing the spectral and singular value distributions of matrix-sequences exhibiting such structured, possibly hidden, Toeplitz behavior. This framework is particularly well suited to matrix-sequences arising from the discretization of PDEs under general assumptions; see~\cite{barbarino2022systematic,Barbarino2020,Barbarino2020-bis,garoni2017generalized,garoni2018generalized} for the general theory and~\cite{garoni2025introduction,glt-tutorial,garoni2019multilevel} for application-oriented surveys. GLT sequences arise naturally in the discretization of both differential and integral equations~\cite{SerraCapizzano2006,garoni2017generalized,garoni2018generalized}, and their analysis has established close connections with function spaces, spectral approximation, and operator theory~\cite{G.Barbarino}. The spectral and singular value distributions associated with a GLT sequence, in particular, provide useful information for several purposes, including (i) assessing the spectral accuracy of discretization schemes~\cite{bianchi2021spectral}, (ii) verifying the preservation of average spectral gaps~\cite{BianchiSerraCapizzano2018}, and (iii) predicting eigenvalue distributions analytically~\cite{garoni2018generalized}. Moreover, GLT analysis can be used to derive precise information on the asymptotic behavior of iterative solvers, including Krylov methods~\cite{SerraCapizzano2006,garoni2017generalized,garoni2018generalized}.

A further advantage of the GLT framework is that it can be employed constructively in the design and analysis of preconditioners, multigrid operators, and multi-iterative methods; see \cite{glt-ratnani} and references therein. A representative example is the construction of multigrid and multi-iterative solvers for high-degree Isogeometric Analysis \cite{Donatelli2017}; see also~\cite{Takacs2015} for a complementary approach based on suitably designed approximation spaces.

\ \
\noindent
\subsubsection*{Main Contributions}
\noindent

Let ${A_n}$ denote a matrix-sequence arising from the discretization of an elliptic problem or, more generally, a block-structured matrix-sequence admitting a GLT symbol $\kappa(\mathbf{x},\boldsymbol{\theta})$. Within the GLT framework, and from a purely algebraic viewpoint, we establish the following results.

\begin{itemize}
	\item In the nonoverlapping case, both the additive and multiplicative Schwarz operators generate matrix-sequences ${P_n}$ satisfying
	$$
	{P_n}\sim_{\mathrm{GLT}}\kappa(\mathbf{x},\boldsymbol{\theta}).
	$$
	
	\item In the overlapping case, the multiplicative Schwarz operator and its restricted variants satisfy the same GLT relation. The corresponding property does not, however, hold in general for the additive Schwarz operator; in that case, a restriction is required to guarantee
	$$
	{P_n}\sim_{\mathrm{GLT}}\kappa(\mathbf{x},\boldsymbol{\theta}).
	$$
\end{itemize}

Preservation of the GLT symbol does not, however, preclude the presence of outlier eigenvalues or singular values. Such outliers may substantially degrade the effectiveness of $P_n$ as a preconditioner, particularly when they approach zero. We address this issue by proposing computationally inexpensive strategies for controlling their effect. In this respect, the present work constitutes a first step toward several of the research directions suggested in~\cite{GLH}.

In the companion paper~\cite{twin}, with the exception of the overlapping additive Schwarz case, we further show that the preconditioned matrix-sequences associated with the methods considered here satisfy
$$
{P_n^{-1}A_n}\sim_{\mathrm{GLT}}1,
$$
which provides a rigorous spectral-distribution justification for the corresponding preconditioned iterative methods. Furthermore, the GLT framework yields constructive information that can be exploited to devise new variants of the methods considered here while preserving the fundamental property
${P_n^{-1}A_n}\sim_{\mathrm{GLT}}1$, which, for a symbol $\kappa(\mathbf{x},\boldsymbol{\theta})$ sparsely vanishing i.e. with zeros of zero measure, reduces to ${P_n}\sim_{\mathrm{GLT}}\kappa(\mathbf{x},\boldsymbol{\theta})$ whenever ${A_n}\sim_{\mathrm{GLT}}\kappa(\mathbf{x},\boldsymbol{\theta})$.

\subsubsection*{Organization of the Work}

In Section \ref{sec2:block prec} we introduce block preconditioners which are used in the context of domain decomposition techniques.
Section~\ref{sec3:glt th} is devoted to introduce the axioms that characterize the GLT theory and other tools recently developed for block structures.

Section~\ref{sec5:unified glt} presents our main results, namely a unified GLT-based analysis of Schwarz block preconditioners for GLT sequences. We prove that the block Jacobi, block Gauss--Seidel, restricted additive Schwarz, multiplicative Schwarz, and restricted multiplicative Schwarz preconditioners all preserve the GLT symbol of the underlying sequence. In contrast, the additive Schwarz preconditioner fails to preserve the GLT symbol in the overlapping case, and restriction is shown to be essential for recovering this property. Consequently, whenever the GLT symbol is preserved, the preconditioned matrices exhibit spectral clustering at $1$, which in turn guarantees rapid convergence of the associated Krylov methods. These results provide the first comprehensive GLT-theoretic justification for the effectiveness of Schwarz-type preconditioners.

In Section~\ref{sec6:num}, we present numerical experiments that validate our theoretical predictions, showing that the proposed Schwarz operators perform effectively both as standalone iterative methods and as preconditioners within Krylov solvers. Section~\ref{sec7:end} presents concluding remarks.

\section{Iterative methods with classical block preconditioners}\label{sec2:block prec}

The main objective of this study is to design and analyze an optimal solver
based on the domain decomposition method for the linear system
\begin{equation}
	A_n u = F,
	\label{eq:linear_system}
\end{equation}
where $A_n$ is a square matrix of size $d_n \times d_n$.

In general, the global matrix $A_n$ exhibits a block-tridiagonal structure
associated with a decomposition of the computational domain $\Omega$ into
$\nu$ overlapping or non-overlapping subdomains $\Omega_1,\ldots,\Omega_\nu$,
$\Omega=\cup_{i=1}^\nu \Omega_i$. In the context of the present work we
consider mainly $\Omega=[0,1]$, $\Omega_i=[\alpha_i,\beta_i]$,
$\alpha_i<\beta_i$, $i=1,\ldots,\nu$.

The latter decomposition leads to a block-structured linear system of the form
\begin{equation}\label{eq:general_matrix_nu}
	A_n =
	\begin{bmatrix}
		A_{11} & A_{12} & O      & \cdots & O \\
		A_{21} & A_{22} & A_{23} & \ddots & \vdots \\
		O      & A_{32} & A_{33} & \ddots & O \\
		\vdots & \ddots & \ddots & \ddots & A_{3(\nu-1),3\nu-2} \\
		O      & \cdots & O & A_{3\nu-2,3(\nu-1)} & A_{3\nu-2,3\nu-2}
	\end{bmatrix},
\end{equation}
where each block $A_{ij}$ is of dimension ${n_i} \times {n_j}$, and the
total size satisfies
\[
d_n = \sum_{i=1}^{3\nu-2} {n_i}.
\]
The diagonal blocks $A_{ii}$ correspond to the discrete operators on each
subdomain $\Omega_i$, while the off-diagonal blocks $A_{i,i+1}$ and
$A_{i+1,i}$ represent the coupling conditions between consecutive subdomains; see Section 5.1 of \cite{GanderZhang2019} for a
closely related block-tridiagonal structure arising from sequential domain
decompositions.

\begin{remark}[\textbf{Geometric description of overlap}]\label{rem:non and overlap with nu subd}
	By recalling the setting at the beginning of Section~\ref{sec2:block prec},
	the general case of $\nu$ non-overlapping subdomains
	$\Omega_1,\ldots,\Omega_\nu$, $[0,1]=\Omega=\cup_{i=1}^\nu \Omega_i$,
	$\Omega_i=[\alpha_i,\beta_i]$, $\alpha_i<\beta_i$, $i=1,\ldots,\nu$, is
	described by the conditions $\beta_i=\alpha_{i+1}$, $i=1,\ldots,\nu-1$.
	
	The general case of $\nu$ overlapping subdomains is described by the
	conditions $\alpha_i<\alpha_{i+1}$ and $\beta_i<\beta_{i+1}$,
	$i=1,\ldots,\nu-1$ (this ensures that a subdomain is not a subset of
	another), together with the existence of an index $j$ such that
	$\alpha_{j+1}<\beta_j$. The set of indices $j\in\{1,\ldots,\nu-1\}$ such
	that $\alpha_{j+1}<\beta_j$ is denoted by $\mathcal{OV}$. Furthermore, we
	require $\beta_i<\alpha_{i+2}$, $i=1,\ldots,\nu-2$; otherwise the subdomain
	$\Omega_{i+1}\subset\Omega_i\cup\Omega_{i+2}$ would be redundant.
	
	As a consequence, the presence of overlapping is equivalent to the condition $\#\mathcal{OV}\ge 1$ or, in other terms,
	$\#\mathcal{OV}=0$ if and only if no overlapping is observed.
	
	Furthermore, by $a_i$ we indicate the characteristic function on $[0,1]$ of
	the domain $\Omega_i=[\alpha_i,\beta_i]$, $i=1,\ldots,\nu$. In the
	overlapping case, further sets have to be defined. More precisely, we introduce the sets $S_i,T_i$ with
	$S_i\cup T_i=\Omega_i\cap\Omega_{i+1}$ and $S_i\cap T_i=\emptyset$, where
	$\Omega_i$, $S_i$, and $T_i$ contain, respectively, $d_i(n)+n_{3i-4}+n_{3i}$,
	$n_{3i-1}$, and $n_{3i+1}$ grid points, $i=1,\ldots,\nu-1$, with the convention
	that $n_j=0$ when $j\le 0$ or $j\ge 3\nu+1$.
\end{remark}

\begin{itemize}
	\item We first consider the following \textbf{diagonal block} (without
	overlap, $\#\mathcal{OV}=0$). For $i=1,\ldots,\nu$ let $d_i(n)$ be the
	number of degrees of freedom attached to $\Omega_i$, so that
	\begin{equation}\label{eq:dn-general}
		d_n = \sum_{i=1}^{\nu} d_i(n).
	\end{equation}
	Define the restriction operators
	\begin{equation}\label{eq:Ri-general-noov}
		R_i = \big[\, O \ \cdots\ O \ \ I_{d_i(n)} \ \ O\ \cdots\ O \,\big],
		\qquad i=1,\ldots,\nu,
	\end{equation}
	where the identity block occupies the columns of $A_n$ associated with
	$\Omega_i$; by construction $\sum_{i=1}^{\nu} R_i^T R_i = I_{d_n}$, i.e.
	$\{R_i\}_{i=1}^\nu$ realizes a genuine partition of unity (no double
	counting). For each $i=1,\ldots,\nu$, the transpose $R_i^T$ acts as a
	\textbf{prolongation operator}; the terminology is standard in the
	multigrid literature.
	
	The local operators are
	\begin{equation}\label{eq:Ai-general-noov}
		A_i := R_i A_n R_i^T, \qquad i=1,\ldots,\nu,
	\end{equation}
	square of size $d_i(n)$, and
	\begin{equation}\label{no overlap nu-general}
		\mathrm{BlockDiag}_{d_1(n),\ldots,d_\nu(n)}(A_n) =
		\begin{bmatrix}
			A_{1} & O & \cdots & O \\
			O     & A_{2} & \cdots & O \\
			\vdots & \vdots & \ddots & \vdots \\
			O & O & \cdots & A_{\nu}
		\end{bmatrix}.
	\end{equation}
	The operators $\mathrm{BlockDiag}(\cdot)$ are studied in some detail in
	\cite{GLH} in connection with the new class of generalized locally
	Hankel matrix-sequences.
	
	The corresponding \textbf{block Jacobi preconditioner} $P_n$ is that
	defined in \eqref{no overlap nu-general}, and we have
	\begin{equation}\label{BJ:block-nu}
		P^{-1}_n = \left(P_{BJ, n}^{(\nu)}(A_n) \right)^{-1}:=
		\left(\mathrm{BlockDiag}_{d_1(n),\ldots,d_\nu(n)}(A_n)\right)^{-1}
		= \sum_{i = 1}^{\nu} R_i^{T} A_{i}^{-1} R_i.
	\end{equation}
	
	The standard \textbf{block Jacobi iteration} using these $\nu$ blocks
	has the iteration operator
	\begin{equation}\label{eq:TBJ-nu}
		T_n = T_{BJ, n}^{(\nu)}(A_n) := I - \left(P_{BJ, n}^{(\nu)}\right)^{-1} A_n
		= I - \sum_{i=1}^{\nu} R_i^T A_i^{-1} R_i A_n.
	\end{equation}
	For a given initial vector $u_0$, the iterative scheme is expressed as
	\[
	u^{k+1} = T_n u^k + P^{-1}_nF, \qquad k = 0, 1, 2, \dots,
	\]
	and converges linearly with an asymptotic convergence factor
	$\rho(T_n)$, the spectral radius of the iteration operator. Later, we
	will analyze the spectral features of the matrix-sequence $\{T_n\}$ in
	detail, using the GLT theory.
	
	\item Similarly, using the $\mathrm{BlockTril}(\cdot)$ operators studied
	in \cite{GLH}, the \textbf{block Gauss--Seidel preconditioner} (without
	overlap, $\#\mathcal{OV}=0$) is defined as
	\begin{equation}\label{BGS:block-nu}
		P^{-1}_n = \left(P_{BGS, n}^{(\nu)}(A_n)\right)^{-1}:=
		\left(\mathrm{BlockTril}_{d_1(n),\ldots,d_\nu(n)}(A_n)\right)^{-1}
		=
		\left( I - \prod_{i=\nu}^{1} \big(I - R_i^{T} A_{i}^{-1} R_i A_n \big) \right) A_n^{-1},
	\end{equation}
	where the blocks $A_i$ and the restriction operators $R_i$,
	$i=1,\ldots,\nu$, are the same as in the previous block Jacobi
	definition, and $\mathrm{BlockTril}(\cdot)$ retains the
	block-lower-triangular part of $A_n$ (including the diagonal blocks
	$A_i$) with respect to the partition \eqref{eq:dn-general}. Again, the
	standard \textbf{block Gauss--Seidel iteration} using these $\nu$
	blocks has the iteration operator
	\begin{equation}\label{eq:TBGS-nu}
		T_n = T_{BGS, n}^{(\nu)} (A_n):= I - \left(P_{BGS, n}^{(\nu)}\right)^{-1} A_n
		= \prod_{i=\nu}^{1} \big(I - R_i^{T} A_{i}^{-1} R_i A_n\big),
	\end{equation}
	the product being ordered from $i=\nu$ down to $i=1$.
	
	\item Consider now the same construction \emph{with overlap}, i.e.
	$\#\mathcal{OV}\ge 1$. For each $i=1,\ldots,\nu$ let $R_i$ now be the
	restriction operator selecting the degrees of freedom associated with
	the (enlarged, overlapping) subdomain $\Omega_i$, of size
	$\left(\sum_{k\in K_i} n_k\right)\times d_n$, where $K_i$ is the index
	set of blocks of $A_n$ overlapping $\Omega_i$. For an interior
	subdomain overlapping both neighbors this takes the form
	\begin{equation}\label{block Schwarz-nu}
		A_i =
		\begin{bmatrix}
			A_{i-1,i-1} & A_{i-1,i} & O \\
			A_{i,i-1} & A_{i,i} & A_{i,i+1} \\
			O      & A_{i+1,i} & A_{i+1,i+1}
		\end{bmatrix}, \qquad A_i := R_i A_n R_i^T,
	\end{equation}
	consistent with the row/column blocks of $A_n$ in
	\eqref{eq:general_matrix_nu}, with the obvious modification at $i=1$
	and $i=\nu$ (which involve only one neighbor). Unlike the
	non-overlapping case, $\sum_{i=1}^\nu R_i^T R_i \ge I_{d_n}$
	componentwise, with equality if and only if $\#\mathcal{OV}=0$.
	
	Using this notation, the \textbf{block additive} and \textbf{block
		multiplicative Schwarz preconditioners} are defined as
	\begin{equation}\label{eq:add_mul_pre-nu}
		\left(P_{BAS, n}^{(\nu)} (A_n)\right)^{-1}:= \sum_{i=1}^{\nu} R_i^{T} A_{i}^{-1} R_i,
		\qquad
		\left(P_{BMS, n}^{(\nu)}(A_n)\right)^{-1} :=
		\left( I - \prod_{i=\nu}^{1} \big(I - R_i^{T} A_{i}^{-1} R_i A_n \big) \right) A_n^{-1},
	\end{equation}
	respectively.
	
	\begin{remark}\label{rem:no overlap ie BJ-BGS}
		Comparing these blocks, one concludes that the classical Schwarz
		preconditioners can be regarded as a generalization of
		\textbf{block Jacobi} or \textbf{block Gauss--Seidel}
		preconditioners, with the addition of overlap. Indeed, when
		considering the theoretical analysis, we show how to interpret the
		overlapping case in terms of a combination of several
		\textbf{block Jacobi} or \textbf{block Gauss--Seidel}
		preconditioners, respectively, acting simultaneously: this is the
		key for using the GLT theoretical machinery in our setting.
	\end{remark}
	
	\item From the explicit expression of the preconditioners
	\eqref{eq:add_mul_pre-nu}, one can explicitly write the iteration
	operators for the additive and multiplicative Schwarz methods as
	\begin{align}\label{add_mul_Opr-nu}
		T_{\text{BAS}, n}^{(\nu)}(A_n) &= I - \sum_{i=1}^{\nu} R_i^T A_i^{-1} R_i A_n, \\
		\label{mul_pre_only-nu}
		T_{\text{BMS}, n}^{(\nu)} (A_n) &= \prod_{i=\nu}^{1} \left(I - R_i^T A_i^{-1} R_i A_n\right),
	\end{align}
	respectively.
	
	The additive Schwarz iteration (with overlap) associated with the
	operator $T_{\text{BAS}, n}^{(\nu)}$ is usually \textbf{not
		convergent}, since for overlapping subdomains it holds that
	$2I \ge R_i^T R_i \ge I$, with constants that cannot be improved. To
	address this, one commonly introduces a relaxation parameter
	$0 < \gamma < 1$. However, an alternative approach is provided by the
	\textbf{restricted additive Schwarz (RAS)} method \cite{GanderRas,
		CaiSarkis1999, frommer2001algebraic}. In the theoretical section we
	will show that the GLT theory explains why one should expect RAS to be
	\emph{faster} than the additive Schwarz method with relaxation
	parameter $\gamma$.
	
	The RAS method uses the local solvers with overlap, employing the
	standard restriction operators $R_i$, but uses \emph{prolongation
		operators without overlap} $\widetilde{R}_i^T$: $\widetilde{R}_i$ is
	the restriction operator $R_i$ with the overlapping rows zeroed out, so
	that each degree of freedom is selected exactly once,
	\begin{equation}\label{eq:partition-unity-nu}
		\sum_{i=1}^{\nu} \widetilde R_i^T R_i = I_{d_n},
	\end{equation}
	eliminating double counting of variables in the overlap. Under certain
	assumptions, there is no need for a relaxation parameter to ensure
	convergence. Set
	\begin{equation}\label{eq:hatR-nu}
		\widehat R_i = \begin{cases}
			\widetilde R_i, & i\in \mathcal{OV} \text{ (or adjacent to an overlap)},\\
			R_i, & \text{otherwise},
		\end{cases}
		\qquad i=1,\ldots,\nu,
	\end{equation}
	so that $\widehat R_i = R_i$, whenever $\#\mathcal{OV}=0$. With this
	notation, the RAS iteration operator is
	\begin{equation}\label{eq:TRAS-nu}
		T_{\text{BRAS}, n}^{(\nu)}(A_n) = I - \sum_{i=1}^{\nu}  \widehat{R}_i^T A_i^{-1} R_i A_n.
	\end{equation}
	
	Similarly, the \textbf{restricted multiplicative Schwarz (RMS)}
	\cite{nabben2002convergence} iteration operator is defined as
	\begin{equation}\label{eq:TRMS-nu}
		T_{\text{BRMS}, n}^{(\nu)}(A_n) = \prod_{i=\nu}^{1} \left(I -  \widehat{R}_i^T A_i^{-1} R_i A_n\right).
	\end{equation}
	When $\#\mathcal{OV}=0$, \eqref{eq:TRAS-nu} and \eqref{eq:TRMS-nu}
	coincide with the block Jacobi operator \eqref{eq:TBJ-nu} and the
	block Gauss--Seidel operator \eqref{eq:TBGS-nu}, respectively, since
	$\widehat R_i = R_i$ for every $i$.
\end{itemize}

\begin{remark}\label{rem:no need restriction in multiplicative}
	Although for the multiplicative Schwarz the use of $\widehat{R}_i^T$
	is not strictly necessary to avoid double counting, this method is
	included here for completeness.
\end{remark}

\begin{remark}\label{rem:symmetric restrictions}
	When a real symmetric (or Hermitian) problem is considered, it is a
	good numerical choice to preserve the real symmetry (or the Hermitian
	character) in the design of a preconditioner. In this spirit, the
	symmetrized version of the RAS iteration matrices is defined as
	\begin{equation}\label{eq:TRASsym-nu}
		T_{\text{BRAS-sym}, n}^{(\nu)}(A_n) = I - \sum_{i=1}^{\nu}  \widehat{R}_i^T A_i^{-1} \widehat{R}_i A_n,
	\end{equation}
	with $\widehat{R}_i=D_i{R}_i$ such that $\sum_{i=1}^{\nu}
	\widehat{R}_i^T{R_i}$ is the identity and $D_i$ are diagonal.
	Unfortunately, considering the corresponding expression
	$\prod_{i=\nu}^{1} \left(I -  \widehat{R}_i^T A_i^{-1} \widehat{R}_i A_n\right)$,
	we obtain a variation of the RMS iteration matrices which is still
	non-symmetric, while the formulation
	\begin{equation}\label{eq:TRMSsym-nu}
		T_{\text{BRMS-sym}, n}^{(\nu)}(A_n) = T^TT, \qquad T=T_{\text{BRMS}, n}^{(\nu)}(A_n)
	\end{equation}
	is.
	
	In all the previous reasoning, in order to apply the GLT theory, the
	diagonal weight matrices $D_i$, $i=1,\ldots,\nu$, are chosen as
	diagonal sampling matrices.
\end{remark}

\section{Overview of the Theory of GLT Sequences}\label{sec3:glt th}

In the present section, we provide a concise overview of the main tool that we use in this paper, i.e., the theory of GLT matrix-sequences, which is instrumental in analyzing the spectral distribution of additive and multiplicative Schwarz methods and their variants, including the restricted versions. For brevity, we present only the results that are directly needed in this work. For readers new to the subject, we refer to \cite{glt-tutorial,garoni2019multilevel,garoni2025introduction}, while more advanced treatments can be found in \cite{barbarino2022systematic,Barbarino2022, Barbarino2020, garoni2017generalized, garoni2018generalized}.\\
\textbf{Singular Value and Spectral Distribution of a Sequence of Matrices.}
Let \(\mu_k\) denote the Lebesgue measure on \(\mathbb{R}^k\). Throughout this paper, standard terminology from measure theory (such as "measurable set", "measurable function", "{almost everywhere} (a.e)", etc.) always refers to the Lebesgue measure. A matrix-valued function
\[
f : D \subset \mathbb{R}^k \longrightarrow \mathbb{C}^{s \times s}
\]
is said to be measurable (respectively, continuous, continuous a.e., in \(\mathbb{L}^{1}(D)\), etc.) if each of its components
\[
f_{ij} : D \longrightarrow \mathbb{C}, \quad i,j = 1, \dots, s,
\]
is measurable (respectively, continuous, continuous a.e., in \(\mathbb{L}^{1}(D)\), etc.). For every \(x, y \in \mathbb{R}\), define \(x \wedge y = \min(x, y)\). We denote by \(C_c(\mathbb{C})\) (respectively, \(C_c(\mathbb{R})\)) the space of complex-valued (respectively, real-valued) functions with compact support. The singular values of a matrix \(A \in \mathbb{C}^{m \times n}\) are denoted by \(\sigma_i(A)\), \(i = 1, \ldots, m \wedge n\), and the eigenvalues of a matrix \(A \in \mathbb{C}^{m \times m}\) are denoted by \(\lambda_i(A)\), \(i = 1, \dots, m\). The spectrum of a matrix \(A \in \mathbb{C}^{m \times m}\) is denoted by \(\Lambda(A)\). By definition, a sequence of matrices is a sequence of the form \(\{A_n\}_n\), where \(n\) varies over an infinite subset of \(\mathbb{N}\), and each \(A_n\) is of size \(d_n \times e_n\) with both \(d_n\) and \(e_n\) tending to \(\infty\) as \(n \to \infty\).

\begin{lemma}\label{set-bar}
Let $\{A_n\}_n$ be a sequence of square matrices and let $f : D \subset \mathbb{R}^k \to \mathbb{C}^{s \times s}$ be measurable with $0 < \mu_k(D) < \infty$. If $\{A_n\}_n \sim_\lambda f$ and $\Lambda(A_n)$ is contained in $S \subset \mathbb{C}$ for all $n$, then
$
\Lambda(f) \subset \bar{S} \; \text{a.e.}
$
\end{lemma}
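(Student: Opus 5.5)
We argue by contradiction, using the definition of spectral distribution: $\{A_n\}_n\sim_\lambda f$ means that for every $F\in C_c(\mathbb{C})$
\[
\lim_{n\to\infty}\frac{1}{d_n}\sum_{i=1}^{d_n}F(\lambda_i(A_n))=\frac{1}{\mu_k(D)}\int_D \frac{1}{s}\sum_{j=1}^{s}F(\lambda_j(f(\mathbf{y})))\,d\mathbf{y}.
\]
The idea is to test this identity with a continuous function that vanishes on $\bar S$ but is positive on part of the spectrum of $f$. The left-hand side is then identically zero, so the right-hand side must vanish, and this forces the eigenvalues of $f$ into $\bar S$ almost everywhere.

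\textbf{Step 1 (measurability).} The eigenvalue functions of $f$ can be chosen measurable: the roots of $\det(\lambda I-f(\mathbf{y}))$ depend continuously, as an unordered $s$-tuple, on the coefficients, and those coefficients are measurable in $\mathbf{y}$. Hence for any closed set $C$ the set
\[
E_C=\{\mathbf{y}\in D:\ \Lambda(f(\mathbf{y}))\not\subset C\}
\]
is measurable. Since $\mathbf{y}\mapsto \operatorname{dist}(\lambda_j(f(\mathbf{y})),C)$ is measurable, we have $E_C=\bigcup_j\{\mathbf{y}:\operatorname{dist}(\lambda_j(f(\mathbf{y})),C)>0\}$.

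\textbf{Step 2 (covering the complement).} Suppose $\mu_k(E_{\bar S})>0$. The open set $\mathbb{C}\setminus\bar S$ is a countable union of closed discs $B_m$ with $B_m\subset\mathbb{C}\setminus\bar S$, for instance discs with rational centers and rational radii. Then $E_{\bar S}=\bigcup_m\{\mathbf{y}:\exists j,\ \lambda_j(f(\mathbf{y}))\in B_m\}$, and by countable subadditivity some $m$ satisfies
\[
\mu_k\big(\{\mathbf{y}:\exists j,\ \lambda_j(f(\mathbf{y}))\in B_m\}\big)>0.
\]

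\textbf{Step 3 (test function).} Choose $F\in C_c(\mathbb{C})$ with $F\ge0$, $F=1$ on $B_m$, and $\operatorname{supp}F$ contained in a slightly larger disc that still avoids $\bar S$. This is possible because $B_m$ is compact and lies in the open set $\mathbb{C}\setminus\bar S$, so it has positive distance from $\bar S$. Since $\Lambda(A_n)\subset S$, every term $F(\lambda_i(A_n))$ is zero, so the left-hand side is $0$ for all $n$. The right-hand side, however, is at least
\[
\frac{1}{s\,\mu_k(D)}\,\mu_k\big(\{\mathbf{y}:\exists j,\ \lambda_j(f(\mathbf{y}))\in B_m\}\big)>0.
\]
This contradiction shows $\mu_k(E_{\bar S})=0$, that is, $\Lambda(f)\subset\bar S$ almost everywhere.

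The only delicate point is the measurability of the eigenvalue functions and of the exceptional set in Step 1. We expect to handle it either by citing the standard measurable selection of eigenvalues, or by avoiding selections altogether. For the latter, observe that $\mathbf{y}\mapsto\sum_j F(\lambda_j(f(\mathbf{y})))$ is a continuous symmetric function of the roots, hence a continuous function of the coefficients of the characteristic polynomial, and therefore measurable. The contradiction in Step 3 then only requires this sum to be positive on a set of positive measure.
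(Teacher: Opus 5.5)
Your proof is correct and is essentially the argument behind the paper's proof, which just cites \cite[Lemma~2.3 and Corollary~2.15]{Barbarino2020}: there, a test function supported away from $\bar S$ shows that the essential range of $f$ lies in $\bar S$, and a countable cover of the complement of the essential range by discs shows that $\Lambda(f(\mathbf{x}))$ lies in it for a.e.\ $\mathbf{x}$. You fuse these two steps into one contradiction, and your selection-free handling of measurability via continuous symmetric functions of the eigenvalues (Lemma~2.1 of \cite{Barbarino2020}, restated in the paper) is enough for every set you use.
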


\begin{proof}
{See Lemma 2.3 and Corollary 2.15 in \cite{Barbarino2020}.}
\end{proof}

\begin{Definition}[\textbf{Singular value and spectral distribution of a sequence of matrices\label{def:singular_spectral_distribution}}]
	\leavevmode
	\begin{itemize}
		\item Let $\{A_n\}_n$ be a sequence of matrices with $A_n$ of size $d_n \times e_n$, and let
		$f : D \subset \mathbb{R}^k \to \mathbb{C}^{s \times t}$ be measurable with
		$0 < \mu_k(D) < \infty$.
		
		We say that $\{A_n\}_n$ has a \emph{singular value distribution} described by $f$, and we write
		\[
		\{A_n\}_n \sim_{\sigma} f,
		\]
		if
		\[
		\lim_{n \to \infty}
		\frac{1}{d_n \wedge e_n}
		\sum_{i=1}^{d_n \wedge e_n}
		F\big(\sigma_i(A_n)\big)
		=
		\frac{1}{\mu_k(D)}
		\int_D
		\left(
		\frac{1}{s \wedge t}
		\sum_{i=1}^{s \wedge t}
		F\big(\sigma_i(f(x))\big)
		\right)
		dx,
		\quad \forall\, F \in C_c(\mathbb{R}).
		\]
		
		\item Let $\{A_n\}_n$ be a sequence of square matrices with $A_n$ of size $d_n \times d_n$, and let
		$f : D \subset \mathbb{R}^k \to \mathbb{C}^{s \times s}$ be measurable with
		$0 < \mu_k(D) < \infty$.
		
		We say that $\{A_n\}_n$ has a \emph{spectral (or eigenvalue) distribution} described by $f$, and we write
		\[
		\{A_n\}_n \sim_{\lambda} f,
		\]
		if
		\[
		\lim_{n \to \infty}
		\frac{1}{d_n}
		\sum_{i=1}^{d_n}
		F\big(\lambda_i(A_n)\big)
		=
		\frac{1}{\mu_k(D)}
		\int_D
		\left(
		\frac{1}{s}
		\sum_{i=1}^{s}
		F\big(\lambda_i(f(x))\big)
		\right)
		dx,
		\quad \forall\, F \in C_c(\mathbb{C}).
		\]
	\end{itemize}
\end{Definition}
For the well-posedness of \eqref{def:singular_spectral_distribution}, we require the following lemma.

\begin{lemma} [{\cite{Barbarino2020}, Lemma 2.1}]
	Let $f : D \subseteq \mathbb{R}^k \to \mathbb{C}^{s \times s}$ be a measurable function, and
	let $g : \mathbb{C}^s \to \mathbb{C}$ be continuous and symmetric in its $s$ arguments,
	that is,
	\[
	g(\lambda_1, \ldots, \lambda_s) = g(\lambda_{\rho(1)}, \ldots, \lambda_{\rho(s)})
	\quad \text{for all permutations } \rho \text{ of } \{1, \ldots, r\}.
	\]
	Then, the mapping
	\[
	x \mapsto g\big(\lambda_1(f(x)), \ldots, \lambda_s(f(x))\big)
	\]
	is well-defined (independently of the ordering of the eigenvalues of $f(x)$) and measurable.
	
	As a consequence:
	\begin{itemize}
		\item the mapping $x \mapsto g\big(\sigma_1(f(x)), \ldots, \sigma_s(f(x))\big)$ is measurable;
		\item for every continuous function $F : \mathbb{C} \to \mathbb{C}$, the mappings
		\[
		x \mapsto \sum_{i=1}^s F\big(\lambda_i(f(x))\big)
		\quad \text{and} \quad
		x \mapsto \sum_{i=1}^s F\big(\sigma_i(f(x))\big)
		\]
		are measurable;
		\item for all $p \in [1, \infty]$, the mapping $x \mapsto \|f(x)\|_p$ is measurable.
	\end{itemize}
\end{lemma}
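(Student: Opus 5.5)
The plan is to prove that the map $\Phi_g : \mathbb{C}^{s\times s}\to\mathbb{C}$, $\Phi_g(M)=g(\lambda_1(M),\ldots,\lambda_s(M))$, is well defined and \emph{continuous}. The mapping $x\mapsto \Phi_g(f(x))$ is then a continuous function composed with the measurable function $f$, hence measurable. Well-definedness is immediate: the eigenvalues of $M$, counted with algebraic multiplicity, form an $s$-tuple determined up to a permutation, and $g$ is invariant under permutations of its arguments. Note that in the statement the permutation $\rho$ should act on $\{1,\ldots,s\}$.

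The key step, and the only real obstacle, is continuity of $\Phi_g$. It amounts to continuity of the eigenvalues as an \emph{unordered} tuple, i.e.\ continuity of the roots of a monic polynomial as functions of its coefficients. I would argue by sequences. Let $M_k\to M$. The coefficients of the characteristic polynomials $p_k(z)=\det(zI-M_k)$ are polynomials in the matrix entries, so $p_k\to p=\det(zI-M)$ uniformly on compact sets.

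Let $\mu_1,\ldots,\mu_r$ be the distinct eigenvalues of $M$, with multiplicities $m_1,\ldots,m_r$. Fix $\varepsilon>0$ smaller than half the minimal distance between the $\mu_j$. By Rouché's (or Hurwitz's) theorem, for $k$ large $p_k$ has exactly $m_j$ roots in the disc $B(\mu_j,\varepsilon)$, for each $j$. Since $\sum_j m_j=s$, these account for all $s$ roots of $p_k$.

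Hence, for every $k$ large enough, there is an ordering $\lambda^{(k)}\in\mathbb{C}^s$ of the eigenvalues of $M_k$ with $\|\lambda^{(k)}-\lambda\|_\infty<\varepsilon$, where $\lambda$ is a fixed ordering of the eigenvalues of $M$. Now $g$ is continuous at $\lambda$ and does not depend on the ordering. Letting $\varepsilon\to 0$ along the sequence gives $\Phi_g(M_k)\to\Phi_g(M)$.

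The consequences then follow by choosing suitable symmetric continuous functions.
\begin{itemize}
\item For singular values, use $\sigma_i(f(x))=\sqrt{\lambda_i(f(x)^*f(x))}$. The function $x\mapsto f(x)^*f(x)$ is measurable, since its entries are finite sums of products of measurable functions. Apply the main statement to it with $\tilde g(\lambda_1,\ldots,\lambda_s)=g(\sqrt{|\lambda_1|},\ldots,\sqrt{|\lambda_s|})$. This $\tilde g$ is continuous and symmetric, and the absolute value is harmless because the eigenvalues are nonnegative.
\item For $x\mapsto\sum_{i}F(\lambda_i(f(x)))$, take $g(\lambda_1,\ldots,\lambda_s)=\sum_i F(\lambda_i)$, which is continuous and symmetric. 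The singular-value version follows in the same way from the previous point.
\item For the Schatten norms, take $g=(\sum_i\sigma_i^p)^{1/p}$ for $p<\infty$ and $g=\max_i\sigma_i$ for $p=\infty$. Both are continuous and symmetric functions of the singular values, so the previous points apply.
\end{itemize}
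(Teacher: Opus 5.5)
Your proof is correct and takes essentially the same route as the source the paper defers to (Lemma 2.1 of Barbarino, Garoni and Serra-Capizzano): continuity of $M\mapsto g(\lambda_1(M),\ldots,\lambda_s(M))$ from the continuous dependence of the unordered eigenvalues on the entries, composition with the measurable $f$, and then $\sigma_i(f(x))=\sqrt{\lambda_i(f(x)^*f(x))}$ together with suitable symmetric choices of $g$ for the consequences. The differences are minor: you prove the eigenvalue continuity yourself via Rouch\'e's theorem instead of quoting it, and for the Schatten norms you should write $g(z)=(\sum_i|z_i|^p)^{1/p}$ (resp.\ $\max_i|z_i|$) so that $g$ is a continuous symmetric function on all of $\mathbb{C}^s$.
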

\textbf{Informal meaning of the singular value and eigenvalue distribution~\eqref{def:singular_spectral_distribution}.}
Assume that $f$ admits $s$ eigenvalue functions $\lambda_i(f(x))$, $i = 1, \ldots, s$,
which are continuous almost everywhere on $D$.
Then, the eigenvalues of $A_n$, except possibly for $o(d_n)$ outliers,
can be partitioned into $s$ subsets of approximately equal cardinality.
For sufficiently large $n$, the eigenvalues in the $i$-th subset
are approximately equal to the samples of the $i$-th eigenvalue function
$\lambda_i(f(x))$ evaluated on a uniform grid over the domain $D$, $i=1,\ldots,s$.

For example:
\begin{itemize}
	\item If $k = 1$, $d_n = n s$, $f$ is continuous, $D = [a, b]$, and assuming there are no outliers, then
	the eigenvalues of $A_n$ are approximately given by
	\[
	\lambda_i\!\left(f\!\left(a + \frac{j(b-a)}{n}\right)\right),
	\qquad j = 1, \ldots, n, \quad i = 1, \ldots, s,
	\]
	for sufficiently large $n$.
	
	\item If $k = 2$, $d_n = n^2 s$, $f$ is continuous, $D = [a_1, b_1] \times [a_2, b_2]$,
	and assuming there are no outliers, then the eigenvalues of $A_n$ are approximately given by
	\[
	\lambda_i\!\left(
	f\!\left(
	a_1 + \frac{j_1(b_1 - a_1)}{n},\,
	a_2 + \frac{j_2(b_2 - a_2)}{n}
	\right)\right),
	\qquad j_1, j_2 = 1, \ldots, n, \quad i = 1, \ldots, s,
	\]
	for sufficiently large $n$.
\end{itemize}

The same interpretation extends naturally to higher dimensions, $k \ge 3$, and to more general domains $D$ with $\mu_k(D)\in (0,\infty)$: of course for more general domains $D$ we have to compare with uniform samplings and hence we necessarily need that $D$ is measurable according to Peano-Jordan i.e. its characteristic function is Riemann integrable; see e.g. \cite[pp. 398--399]{SerraCapizzano2003}.
An entirely analogous reasoning applies to the singular value distribution as well.

\textbf{Zero-distributed sequences.}
A \emph{zero-distributed sequence} is a sequence of matrices $\{Z_n\}_n$ such that
$\{Z_n\}_n \sim_\sigma 0$, i.e.,
\[
\lim_{n \to \infty} \frac{1}{d_n \wedge e_n}
\sum_{i=1}^{d_n \wedge e_n} F\big(\sigma_i(Z_n)\big) = F(0),
\quad \forall F \in C_c(\mathbb{R}),
\]
where $d_n \times e_n$ is the size of $Z_n$.

\textbf{Toeplitz sequences.}
Let $f : [-\pi, \pi] \to \mathbb{C}^{s \times s}$ be a function in $L^1([-\pi, \pi])$.
Its Fourier coefficients $\{f_k\}_{k \in \mathbb{Z}}$ are defined by
\begin{equation}\label{eq:fourier_coeff}
	f_k = \frac{1}{2\pi} \int_{-\pi}^{\pi} f(\theta) e^{-ik\theta} \, d\theta \in \mathbb{C}^{s \times s},
	\quad k \in \mathbb{Z},
\end{equation}
where the integrals are computed componentwise.

The $n$-th (block) Toeplitz matrix generated by $f$ is the $ns \times ns$ matrix
$T_n(f)$ defined as
\[
T_n(f) = [f_{i-j}]_{i,j=1}^n.
\]

Any sequence of matrices of the form $\{T_{d_n}(f)\}_n$, with $d_n \to \infty$ and
$f : [-\pi, \pi] \to \mathbb{C}^{s \times s}$ in $L^1([-\pi, \pi])$, is referred to as
a \emph{(block) Toeplitz sequence} generated by $f$.

\textbf{Sequences of diagonal sampling matrices.}
Let $a : [0, 1] \to \mathbb{C}^{s \times s}$.
The $n$-th (block) diagonal sampling matrix generated by $a$ is the $ns \times ns$
(block) diagonal matrix $D_n(a)$ defined as
\[
D_n(a) = \operatorname{diag}\big(\{a(i/n)\}_{i=1}^n\big).
\]
Any sequence of matrices of the form $\{D_{d_n}(a)\}_n$, with $d_n \to \infty$ and
$a : [0, 1] \to \mathbb{C}^{s \times s}$, is referred to as a \emph{sequence of (block) diagonal sampling matrices generated by $a$}.

The construction of the GLT sequences is based on the notion of approximating class of sequences. Recently, this notion has been revisited in order to handle in a natural way also the case of approximations of PDEs on either moving or unbounded domains; see \cite{gacs}.

\textbf{GLT Sequences.} In the current section, we collect the properties of GLT sequences that are needed in this paper.
 Throughout this paper, we denote by $I_s$ the $s \times s$ identity matrix.

Before formulating these properties, we introduce some notation and terminology.

Throughout this paper, we denote by $O_{s,t}$ the $s \times s$ zero matrix. The matrix $O_{s,s}$ is denoted by $O_s$.
If the size is clear from the context, we often write $O$ instead of $O_{s,t}$ or $O_s$. If $A$ is a matrix, we denote by $A^\dagger$ the Moore--Penrose pseudoinverse of $A$.
For our purposes, it is relevant that $A^\dagger = A^{-1}$ whenever $A$ is invertible. For more details on the pseudoinverse, see \cite{Golub2013}. {When $A$ is singular, as in \cite{frommer_nabben_szyld_2008,nabben_szyld_2007}, our framework remains valid provided that the associated GLT symbol is invertible almost everywhere.}
The properties of GLT sequences that we need in this paper are listed below; for the corresponding proofs,
see \cite{Barbarino2020}.

\textbf{GLT0*.} Let $\{X_n\}_n$ be a sequence of matrices, with $X_n$ of size $d_n s \times d_n s$ for some fixed positive integer $s$ and some positive integer sequence $\{d_n\}_n$ tending to $\infty$, and let $\kappa, \xi : [0,1] \times [-\pi, \pi] \to \mathbb{C}^{s \times s}$ be measurable.
\begin{itemize}
	\item If $\{X_n\}_n \sim_\text{GLT} \kappa$ and $\kappa = \xi$ a.e., then $\{X_n\}_n \sim_\text{GLT} \xi$.
	\item If $\{X_n\}_n \sim_\text{GLT} \kappa$ and $\{X_n\}_n \sim_\text{GLT} \xi$, then $\kappa = \xi$ a.e.
\end{itemize}

\textbf{GLT1*.} Let $\{X_n\}_n$ be a sequence of matrices, with $X_n$ of size $d_n s \times d_n s$ for some fixed positive integers $s$ and some positive integer sequence $\{d_n\}_n$ tending to $\infty$, and let $\kappa : [0, 1] \times [-\pi, \pi] \to \mathbb{C}^{s \times s}$ be measurable.

\begin{itemize}
	\item If $\{X_n\}_n \sim_\text{GLT} \kappa$, then $\{X_n\}_n \sim_\sigma \kappa$.
	\item If $\{X_n\}_n \sim_\text{GLT} \kappa$ and the matrices $X_n$ are Hermitian, then $\kappa$ is Hermitian a.e., and $\{X_n\}_n \sim_\lambda \kappa$.
\end{itemize}

\textbf{GLT2*.} Let $s$ be a positive integer and let $\{d_n\}_n$ be a positive integer sequence tending to $\infty$. Then,

\begin{itemize}
	\item $\{T_{d_n}(f)\}_n \sim_\text{GLT} \kappa(x, \theta) = f(\theta)$ if $f : [-\pi, \pi] \to \mathbb{C}^{s \times s}$ belongs to $L^1([-\pi, \pi])$;
	\item $\{D_{d_n}(a)\}_n \sim_\text{GLT} \kappa(x, \theta) = a(x)$ if $a : [0, 1] \to \mathbb{C}^{s \times s}$ is continuous a.e.;
	\item For every sequence of matrices $\{Z_n\}_n$ with $Z_n$ of size $d_n s \times d_n s$, we have $\{Z_n\}_n \sim_\text{GLT} \kappa(x, \theta) = O_{s,t}$ if and only if $\{Z_n\}_n \sim_\sigma 0$.
\end{itemize}
\textbf{GLT3*.} Let $\{X_n\}_n$, $\{Y_n\}_n$ be sequences of matrices, with $X_n,Y_n$ of size $d_n s \times d_n s$ for some fixed positive integers $s$ and some positive integer sequence $\{d_n\}_n$ tending to $\infty$, and let $\kappa, \xi : [0,1] \times [-\pi, \pi] \to \mathbb{C}^{s \times s}$
be measurable. Suppose that $\{X_n\}_n \sim_\text{GLT} \kappa$ and $\{Y_n\}_n \sim_\text{GLT} \xi$. Then,

\begin{itemize}
	\item $\{X_n^*\}_n \sim_\text{GLT} \kappa^*$;
	\item $\{\alpha X_n + \beta Y_n\}_n \sim_\text{GLT} \alpha \kappa + \beta \xi$ for every $\alpha, \beta \in \mathbb{C}$;
	\item $\{X_n Y_n\}_n \sim_\text{GLT} \kappa \xi$;
	\item $\{X_n^\dagger\}_n \sim_\text{GLT} \kappa^{-1}$ if $\kappa$ is invertible a.e.;
	\item $\{f(X_n)\}_n \sim_\text{GLT} f(\kappa)$ if the matrices $X_n$ are Hermitian and $f : \mathbb{C} \to \mathbb{C}$ is continuous.
\end{itemize}

\textbf{GLT4*.} Let $\{X_n\}_n$ be a sequence of matrices, with $X_n$ of size $d_n s \times d_n s$ for some fixed positive integer $s$ and some positive integer sequence $\{d_n\}_n$ tending to $\infty$, and let $\kappa : [0,1] \times [-\pi, \pi] \to \mathbb{C}^{s \times s}$ be measurable. Suppose that there exists a sequence of sequences of matrices $\{\{X_{n,m}\}_n\}_m$ with $X_{n,m}$ of size $d_n s \times d_n s$ such that

\begin{itemize}
	\item $\{X_{n,m}\}_n \sim_\text{GLT} \kappa_m$ for some measurable $\kappa_m : [0,1] \times [-\pi, \pi] \to \mathbb{C}^{s \times s}$,
	\item $\kappa_m \to \kappa$ a.e. on $[0,1] \times [-\pi, \pi]$,
	\item $\{X_{n,m}\}_n \xrightarrow{\text{a.c.s.}} \{X_n\}_n$.
\end{itemize}

Then, $\{X_n\}_n \sim_\text{GLT} \kappa$.

We remind that the topological notion of \emph{approximating classes of sequences} (a.c.s.) is the cornerstone of an asymptotic approximation theory for sequences of matrices, that has been developed since the last 30 years and which represents the operative tool for building the GLT theory; see \cite{acs} for the first explicit definition, \cite[Chapter~5]{garoni2017generalized} for a general treatment, and \cite{gacs} for an extension and new results.

\section{One-Level GLT Sequences}\label{sec5:unified glt}

In the following, we restrict our attention to the one-level case of GLT sequences, corresponding to the setting $s = t \ge  1$ introduced earlier.
The statements and the mathematical derivations are done in the case of $s=t=1$. The general case of $s=t>1$ is treated in Remark \ref{s-t>1} and it is not academical since methods such as Discontinuous Galerkin, high order Finite Elements, Isogeometric Analysis with intermediate regularity, and hybrid methods lead to block structures and to matrix-valued GLT symbols \cite{GSS,doro,dumbser,tom}.

\begin{theorem}\label{non overlapping theorem}
	Let $\nu \geq 1$, be a fixed integer, and let $\{d_n\}_n$ be a sequence of positive integers tending to $\infty$. For every $n$, let
	\[
	(n_1, \ldots, n_{3\nu-2}) = (n_1(n), \ldots, n_{3\nu-2}(n))
	\]
	be a partition of $d_n$. Assume that $\left\{ A_n \right\}_n\sim_{\mathrm{GLT}} \kappa$ for some measurable function $\kappa$. Then
	\[
	\left\{ \operatorname{P_{\star, n}^{(\nu)}}(A_{n}) \right\}_n \sim_{\mathrm{GLT}} \kappa,
	\]
	where $\star \in \{\mathrm{BJ},\,\mathrm{BAS},\,  \mathrm{BGS},\, \mathrm{BMS}, \}$ and with $\nu$ non-overlapping subdomains.
	\end{theorem}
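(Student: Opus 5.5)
In the non-overlapping case ($\#\mathcal{OV}=0$) the Schwarz preconditioners reduce to classical ones: $P_{BAS,n}^{(\nu)}(A_n)=P_{BJ,n}^{(\nu)}(A_n)=\mathrm{BlockDiag}_{d_1(n),\ldots,d_\nu(n)}(A_n)$ and $P_{BMS,n}^{(\nu)}(A_n)=P_{BGS,n}^{(\nu)}(A_n)=\mathrm{BlockTril}_{d_1(n),\ldots,d_\nu(n)}(A_n)$. This follows from $\sum_i R_i^TR_i=I_{d_n}$, the block structure $A_i=R_iA_nR_i^T$, and, for the multiplicative case, the standard telescoping identity
\[
I-\prod_{i=\nu}^{1}\big(I-R_i^TA_i^{-1}R_iA_n\big)=\big(\mathrm{BlockTril}(A_n)\big)^{-1}A_n .
\]
This identity is verified by checking that each factor $I-R_i^TA_i^{-1}R_iA_n$ is one step of block forward substitution. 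So it suffices to prove that $\{\mathrm{BlockDiag}(A_n)\}_n\sim_{\mathrm{GLT}}\kappa$ and $\{\mathrm{BlockTril}(A_n)\}_n\sim_{\mathrm{GLT}}\kappa$.

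The key step is to write the block truncations as $A_n$ plus a zero-distributed correction. Let $\Pi_i=R_i^TR_i$ be the diagonal $0/1$ projector onto the indices of $\Omega_i$. Then
\[
\mathrm{BlockDiag}(A_n)=\sum_{i=1}^\nu \Pi_iA_n\Pi_i,\qquad \mathrm{BlockTril}(A_n)=\sum_{i\ge j}\Pi_iA_n\Pi_j .
\]
If the fractions $d_i(n)/d_n$ converge to the lengths of the $\Omega_i$, then $\Pi_i=D_{d_n}(a_i)+E_{n,i}$, where $a_i$ is the characteristic function of $\Omega_i$ (continuous a.e.) and $E_{n,i}$ is diagonal with $o(d_n)$ nonzero entries, hence zero-distributed. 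By GLT2* and GLT3*, $\{\Pi_i\}_n\sim_{\mathrm{GLT}}a_i(x)$. The products then give $\{\Pi_iA_n\Pi_j\}_n\sim_{\mathrm{GLT}}a_ia_j\kappa$. Since the $a_i$ have disjoint supports up to null sets, $a_ia_j=0$ a.e. for $i\ne j$, while $\sum_i a_i^2=\sum_i a_i=1$ a.e. Hence both sums have symbol $\kappa$ by linearity in GLT3* and by GLT0*.

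If instead the partition fractions do not converge, I would avoid the diagonal sampling argument entirely. In that case I would argue directly that $A_n-\mathrm{BlockDiag}(A_n)$ and $A_n-\mathrm{BlockTril}(A_n)$ are zero-distributed, using the fact that the number of blocks $\nu$ is fixed. Writing $A_n-\mathrm{BlockDiag}(A_n)=\sum_{i\ne j}\Pi_iA_n\Pi_j$, each off-diagonal coupling $\Pi_iA_n\Pi_j$ should be small in the a.c.s. sense. This is clear for banded $A_n$, since such a coupling then has rank bounded by the bandwidth. For general GLT sequences I would reduce to the banded case: approximate $A_n$ in a.c.s. by a sum of products $D_{d_n}(a)T_{d_n}(f)$ with trigonometric polynomials $f$, which are banded up to low rank, and then pass to the limit through GLT4*. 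The main obstacle is this step, namely controlling the off-block-diagonal part of a general GLT sequence uniformly with respect to arbitrary partitions. It requires showing that the block-truncation operator commutes with a.c.s. limits, which holds because $\|\Pi_iX\Pi_j\|\le\|X\|$ and rank is preserved under compression.

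Finally, once $P_n$ is shown to differ from $A_n$ by a zero-distributed sequence, the relation $\{P_n\}_n\sim_{\mathrm{GLT}}\kappa$ follows from GLT2* (third item) and GLT3* (linearity), for all four choices of $\star$. For $\nu=1$ the statement is trivial, since then $P_n=A_n$.
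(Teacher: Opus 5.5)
Your proposal is correct. Its first step coincides with the paper's proof, but for the substantive part you take a different route.

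The paper's proof consists only of the reduction you also make: without overlap, $P_{\mathrm{BAS},n}^{(\nu)}=P_{\mathrm{BJ},n}^{(\nu)}=\mathrm{BlockDiag}(A_n)$ and $P_{\mathrm{BMS},n}^{(\nu)}=P_{\mathrm{BGS},n}^{(\nu)}=\mathrm{BlockTril}(A_n)$. It then cites Theorems 4.1 and 4.2 of \cite{GLH}, where $\{\mathrm{BlockDiag}(A_n)\}_n\sim_{\mathrm{GLT}}\kappa$ and $\{\mathrm{BlockTril}(A_n)\}_n\sim_{\mathrm{GLT}}\kappa$ are established for arbitrary partitions. You prove these two facts yourself.

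\textbf{Your first argument.} This is the characteristic-function device $D_{d_n}(a_i)$ that the paper uses in its overlapping theorems, pushed one step further. You write the truncations as sums of compressions $\Pi_iA_n\Pi_j$ and use $a_ia_j=0$ a.e. for $i\neq j$ and $\sum_i a_i^2=1$ a.e. This yields the result from GLT2* and GLT3* alone, with no appeal to \cite{GLH}. The price is the proportionality assumption $d_i(n)/d_n\to|\Omega_i|$, which the paper's overlapping proofs tacitly make anyway.

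\textbf{Your second argument.} You approximate $A_n$ in a.c.s. by banded sums $\sum_k D_{d_n}(a_k)T_{d_n}(f_k)$, observe that their off-block couplings have rank $O(\nu r_m)$ and are therefore zero-distributed, transfer the a.c.s. convergence through block truncation, and conclude with GLT4* with $\kappa_m=0$. This is valid for arbitrary partitions and by itself already covers your first case, so the case split is not needed.

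Three remarks on the second argument:
\begin{itemize}
\item It relies on the standard characterization of GLT sequences as a.c.s. limits of such sums with trigonometric polynomials $f_k$. This is not among the axioms GLT0*--GLT4* listed in the paper and should be cited.
\item Compression does not increase rank; it does not preserve it.
\item For the block-triangular case, $\|\Pi_iX\Pi_j\|\le\|X\|$ alone is not enough. The strictly block-upper part satisfies only $\|\sum_{i<j}\Pi_iX\Pi_j\|\le(\nu-1)\|X\|$, since triangular truncation is not norm-bounded uniformly in the number of blocks. This is precisely where the fixed $\nu$ enters, and it is worth stating explicitly.
\end{itemize}

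A side benefit of your route is that it works on $P_n$ directly rather than on $P_n^{-1}$. Hence no a.e. invertibility of $\kappa$ is required, which matches the hypotheses of this theorem (unlike the overlapping ones). Invertibility of the $A_i$ and of $A_n$ is needed only for the Schwarz-form definitions to make sense.
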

\begin{proof}
First of all we observe that the block Jacobi (BJ) preconditioning and the block Gauss Seidel (BGS) preconditioning coincide with the block additive
Schwarz (BAS) and with the block multiplicative Schwarz (BMS) preconditioning, of course also in their restricted version, since there is no overlapping among the $\nu$ subdomains; see Remark \ref{rem:no overlap ie BJ-BGS}.

The claimed results for the matrix-sequences $\{P_{\star,n}^{(\nu)}(A_n)\}_n$ and $\{P_n^{-1}A_n\}_n$, with $P_n = P_{\star,n}^{(\nu)}(A_n)$, are established in \cite[Theorem~4.1 and Theorem~4.2]{GLH} (see also the more general version for rectangular GLT symbols), while the corresponding results for the preconditioned matrix-sequences follow from Remark~4.1 in \cite{GLH}.

\end{proof}

The overlapping is definitely more involved and for that we need new ideas as already indicated in Remark \ref{rem:no overlap ie BJ-BGS}. For making them crystal clear, the proof of the subsequent three theorems contains all the details in the simplest case of two subdomains i.e. $\nu=2$. Theorem \ref{overlapping theorem - negative} contains a negative result concerning the block additive Schwarz preconditioning with overlapping, while its restricted version shows again the desired behavior. As anticipated in Remark \ref{rem:no need restriction in multiplicative}, the block multiplicative Schwarz does need to be corrected.

\begin{theorem}\label{overlapping theorem - negative}
	Let $\nu \geq 1$, be a fixed integer, and let $\{d_n\}_n$ be a sequence of positive integers tending to $\infty$. For every $n$, let
	\[
	(n_1, \ldots, n_{3\nu-2}) = (n_1(n), \ldots, n_{3\nu-2}(n))
	\]
	be a partition of $d_n$. Assume that $\left\{ A_n \right\}_n\sim_{\mathrm{GLT}} \kappa$ for some measurable function $\kappa$, invertible a.e. Then
	\[
	\left\{ \operatorname{P_{\star, n}^{(\nu)}}(A_{n}) \right\}_n \sim_{\mathrm{GLT}} \psi={\frac{\kappa}{\sum_{i=1}^\nu a_i}},
	\]
	where $\star = \mathrm{BAS}$, with $\nu$ subdomains with overlapping and with $a_i$ being the characteristic function of
	$\Omega_i$, $i=1,\ldots,\nu$.
\end{theorem}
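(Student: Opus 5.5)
The plan is to work with $P_n^{-1}=\sum_{i=1}^\nu R_i^T A_i^{-1}R_i$ and show that $\{P_n^{-1}\}_n\sim_{\mathrm{GLT}}\kappa^{-1}\sum_{i=1}^\nu a_i$. Inverting this by \textbf{GLT3*} then gives the claim $\{P_n\}_n\sim_{\mathrm{GLT}}\psi$. The inversion is legitimate because $\sum_i a_i\ge 1$ on $[0,1]$, since the $\Omega_i$ cover $\Omega$, and $\kappa$ is invertible a.e., so $\kappa^{-1}\sum_i a_i$ is invertible a.e. As announced before the statement, I would first carry out all details for $\nu=2$ and then argue that the general case is a bookkeeping extension.

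The key idea, following Remark~\ref{rem:no overlap ie BJ-BGS}, is to write each term $R_i^TA_i^{-1}R_i$ as a piece of a block Jacobi inverse built with respect to a suitable \emph{non-overlapping} partition. For fixed $i$, I would consider the partition of $\{1,\dots,d_n\}$ into three consecutive groups: the indices before $\Omega_i$, the indices of $\Omega_i$, and the indices after $\Omega_i$. This is a legitimate partition formed by unions of the blocks $n_1,\dots,n_{3\nu-2}$. Let $Q_n^{(i)}=\mathrm{BlockDiag}(A_n)$ with respect to this three-block partition. By Theorem~\ref{non overlapping theorem}, or directly by \cite[Theorem~4.1]{GLH}, we have $\{Q_n^{(i)}\}_n\sim_{\mathrm{GLT}}\kappa$, hence $\{(Q_n^{(i)})^{-1}\}_n\sim_{\mathrm{GLT}}\kappa^{-1}$ by \textbf{GLT3*}. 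I would take pseudoinverses where needed, as the paper allows.

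Next I observe that $R_i^TR_i$ is the diagonal $0/1$ matrix selecting $\Omega_i$. Since the selection is compatible with the block structure of $Q_n^{(i)}$,
\[
R_i^TA_i^{-1}R_i = R_i^TR_i\,(Q_n^{(i)})^{-1}\,R_i^TR_i .
\]
Moreover $R_i^TR_i=D_{d_n}(a_i)+E_n$, where $E_n$ has rank $O(1)$ and hence is zero-distributed. This requires that the block sizes $n_k(n)$ scale proportionally with $d_n$, i.e.\ $n_k/d_n\to$ the lengths of the corresponding intervals. I would make this implicit assumption explicit, since it is what ties the index partition to the geometry of Remark~\ref{rem:non and overlap with nu subd}. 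Since $a_i$ is a characteristic function of an interval, it is continuous a.e., so \textbf{GLT2*} gives $\{R_i^TR_i\}_n\sim_{\mathrm{GLT}}a_i$. By the product rule in \textbf{GLT3*},
\[
\{R_i^TA_i^{-1}R_i\}_n\sim_{\mathrm{GLT}}a_i\kappa^{-1}a_i=a_i\kappa^{-1},
\]
using $a_i^2=a_i$ and the fact that $a_i(x)$ is scalar, so it commutes with $\kappa$ in the matrix-valued case as well. Summing with the linearity in \textbf{GLT3*} yields $\{P_n^{-1}\}_n\sim_{\mathrm{GLT}}\kappa^{-1}\sum_i a_i$, and inversion gives $\psi$.

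The main obstacle I expect is the precise identification of the local inverse with a compressed block-diagonal inverse. The identity above holds exactly: the $\Omega_i$-diagonal block of $Q_n^{(i)}$ is precisely $A_i=R_iA_nR_i^T$, because $Q_n^{(i)}$ is block diagonal. The real care lies in the measure-theoretic matching between the index sets and the intervals $[\alpha_i,\beta_i]$. If the ratios $n_k/d_n$ only converge, then $R_i^TR_i-D_{d_n}(a_i)$ has rank $o(d_n)$ rather than $O(1)$, and it is still zero-distributed, which is all we need. A secondary point is invertibility of the $A_i$. If some $A_i$ is singular, I would use pseudoinverses throughout and rely on the a.e.\ invertibility of $\kappa$, which transfers to the symbol of $Q_n^{(i)}$.
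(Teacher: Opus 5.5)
Your proposal is correct and follows essentially the paper's argument: each $R_i^TA_i^{-1}R_i$ is written as the inverse of a non-overlapping $\mathrm{BlockDiag}(A_n)$ (symbol $\kappa$ by \cite[Theorem~4.1]{GLH}) combined with the $0/1$ selector identified with $D_n(a_i)$, then \textbf{GLT3*} gives $a_i\kappa^{-1}$, the sum gives $\kappa^{-1}\sum_i a_i$, and inversion gives $\psi$. Your additions are clarifications of points the paper leaves implicit: the three-block partition that makes the case $\nu>2$ explicit, the check that $\kappa^{-1}\sum_i a_i$ is invertible a.e.\ because $\sum_i a_i\ge 1$, and the requirement that the index blocks scale with $d_n$, so that the selector differs from $D_{d_n}(a_i)$ only by a zero-distributed term.
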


\begin{proof}
Following (\ref{eq:add_mul_pre-nu}), we have
\[
T_{\text{BAS}, n}^{(\nu = 2)}(A_n) = I - \sum_{i=1}^{2} R_i^T A_i^{-1} R_i A_n,
\]
where
\[
R_1^T A_1^{-1} R_1=
	 	\begin{bmatrix}
		A_1^{-1} & O \\
		O & O
	\end{bmatrix}=\left[\mathrm{BlockDiag}_{\hat{d}_1(n),d_n-\hat{d}_1(n)}(A_n)\right]^{-1} \begin{bmatrix}
		I_{\hat{d}_1(n)} & O \\
		O & O
	\end{bmatrix},
\]
$\hat{d}_1(n)=d_1(n)+n_3$, and
\[
R_2^T A_2^{-1} R_2=\begin{bmatrix}
		O & O \\
		O & A_2^{-1}
	\end{bmatrix}=\left[\mathrm{BlockDiag}_{d_n-\hat{d}_2(n),\hat{d}_2(n)}(A_n)\right]^{-1} \begin{bmatrix}
		O & O \\
		O & I_{\hat{d}_2(n)}
	\end{bmatrix},
\]
$\hat{d}_2(n)=d_2(n)+n_2$, with $d_n=d_1(n)+d_2(n)$, $d_1(n)=n_1+n_2$, $d_2(n)=n_3+n_4$, so that $d_n-\hat{d}_1(n)=n_4$ and $d_n-\hat{d}_2(n)=n_1$.
The right-hand sides of the latter two equations is very informative from the GLT viewpoint since
\[
R_1^T A_1^{-1} R_1 =\left[\mathrm{BlockDiag}_{\hat{d}_1(n),d_n-\hat{d}_1(n)}(A_n)\right]^{-1} D_n(a_1),\quad \
R_2^T A_2^{-1} R_2 =\left[\mathrm{BlockDiag}_{d_n-\hat{d}_2(n),\hat{d}_2(n)}(A_n)\right]^{-1} D_n(a_2),
\]
$a_i$ being the characteristic function of the domain $\Omega_i$, $i=1,2$, $D_n(a_i)$, $i=1,2$, as in Axiom \textbf{GLT2*.}, second part,
for which
\[
\left\{ D_n(a_i) \right\}_n\sim_{\mathrm{GLT}} a_i,\quad i=1,2.
\]
Since $\left\{ A_n \right\}_n\sim_{\mathrm{GLT}} \kappa$, by Theorem \cite[{Theorem 4.1}]{GLH} we deduce
\[
\left\{\mathrm{BlockDiag}_{\hat{d}_1(n),d_n-\hat{d}_1(n)}(A_n) \right\}_n\sim_{\mathrm{GLT}} \kappa, \quad \
\left\{\mathrm{BlockDiag}_{d_n-\hat{d}_2(n),\hat{d}_2(n)}(A_n) \right\}_n\sim_{\mathrm{GLT}} \kappa.
\]
Therefore, using Axiom \textbf{GLT3*.}, fourth part, and Axiom \textbf{GLT3*.}, third part, we obtain
\[
\{R_1^T A_1^{-1} R_1\}_n\sim_{\mathrm{GLT}}  a_1\kappa^{-1},\quad\ \{R_2^T A_2^{-1} R_2\}_n\sim_{\mathrm{GLT}}  a_2\kappa^{-1},
\]
so that  $\left\{ \sum_{i=1}^{2} R_i^T A_i^{-1} R_i A_n\right\}_n\sim_{\mathrm{GLT}} a_1+a_2$, by invoking Axiom \textbf{GLT3*.}, second part and third part.
Hence
\[
\left\{ \operatorname{P_{\star, n}^{(\nu)}}(A_{n}) \right\}_n \sim_{\mathrm{GLT}} \psi={\frac{\kappa}{(a_1+a_2)}}.
\]
The case of $\nu>2$ subdomains follows the same steps, taking into consideration the admissibility conditions in Remark \ref{rem:non and overlap with nu subd}.
\end{proof}

\begin{theorem}\label{overlapping theorem - positive-M}
	Let $\nu \geq 1$, be a fixed integer, and let $\{d_n\}_n$ be a sequence of positive integers tending to $\infty$. For every $n$, let
	\[
	(n_1, \ldots, n_{3\nu-2}) = (n_1(n), \ldots, n_{3\nu-2}(n))
	\]
	be a partition of $d_n$. Assume that $\left\{ A_n \right\}_n\sim_{\mathrm{GLT}} \kappa$ for some measurable function $\kappa$, invertible a.e. Then
	\[
	\left\{ \operatorname{P_{\star, n}^{(\nu)}}(A_{n}) \right\}_n \sim_{\mathrm{GLT}} \kappa,
	\]
	where $\star =\mathrm{BMS}$, with $\nu$ subdomains with overlapping and with $a_i$ being the characteristic function of
	$\Omega_i$, $i=1,\ldots,\nu$.
\end{theorem}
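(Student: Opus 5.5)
We work, as in the proof of Theorem~\ref{overlapping theorem - negative}, in the case $\nu=2$ and then explain how the argument extends to $\nu>2$. The starting point is the definition \eqref{eq:add_mul_pre-nu}:
\[
P_{BMS,n}^{(2)}(A_n)^{-1}=\big(I-T_{BMS,n}^{(2)}(A_n)\big)A_n^{-1},
\qquad
T_{BMS,n}^{(2)}(A_n)=\big(I-R_2^TA_2^{-1}R_2A_n\big)\big(I-R_1^TA_1^{-1}R_1A_n\big).
\]
From the proof of Theorem~\ref{overlapping theorem - negative} we reuse the identities
\[
R_1^TA_1^{-1}R_1=\big[\mathrm{BlockDiag}_{\hat d_1(n),n_4}(A_n)\big]^{-1}D_n(a_1),
\qquad
R_2^TA_2^{-1}R_2=\big[\mathrm{BlockDiag}_{n_1,\hat d_2(n)}(A_n)\big]^{-1}D_n(a_2).
\]
By \cite[Theorem 4.1]{GLH}, both block-diagonal sequences are GLT with symbol $\kappa$. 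Applying \textbf{GLT3*} (inversion, then product), each factor $I-R_i^TA_i^{-1}R_iA_n$ has GLT symbol $1-a_i\kappa^{-1}\kappa=1-a_i$. Hence $\{T_{BMS,n}^{(2)}(A_n)\}_n\sim_{\mathrm{GLT}}(1-a_2)(1-a_1)$.

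The key observation is that this product vanishes a.e. Indeed $a_1+a_2\ge 1$ on $[0,1]=\Omega_1\cup\Omega_2$, and at every point at least one of the $a_i$ equals $1$. So $(1-a_1)(1-a_2)=0$ a.e., which means the multiplicative Schwarz iteration matrix is zero-distributed. Overlapping points are covered twice, but this does not matter here, because in a product the factor $1-a_i$ simply stays $0$. This is exactly the mechanism that fails for the additive version, where we got $1-(a_1+a_2)$ instead. Consequently $\{I-T_{BMS,n}^{(2)}(A_n)\}_n\sim_{\mathrm{GLT}}1$. Since $\kappa$ is invertible a.e., \textbf{GLT3*} gives $\{A_n^{-1}\}_n\sim_{\mathrm{GLT}}\kappa^{-1}$ (using the pseudoinverse if needed). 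Therefore $\{P_{BMS,n}^{(2)}(A_n)^{-1}\}_n\sim_{\mathrm{GLT}}\kappa^{-1}$, which is invertible a.e., and one more application of \textbf{GLT3*} yields $\{P_{BMS,n}^{(2)}(A_n)\}_n\sim_{\mathrm{GLT}}\kappa$. At this last step one should check that we may pass from $P_n^{-1}$ to $P_n$ through the pseudoinverse. This is fine because $(P_n^{-1})^\dagger=P_n$ whenever $P_n$ is invertible, which is implicit in the definition of the preconditioner.

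For general $\nu$ the same chain of steps applies. Each $R_i^TA_i^{-1}R_i$ is written as the inverse of a three-block $\mathrm{BlockDiag}$ of $A_n$ (or two-block for $i=1,\nu$), with partition sizes built from the $n_k$, multiplied by $D_n(a_i)$. The iteration matrix then has symbol $\prod_{i=\nu}^{1}(1-a_i)$, which is $0$ a.e. because $\cup_i\Omega_i=[0,1]$.

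The main obstacle is justifying the $\mathrm{BlockDiag}$ representation for interior subdomains. Concretely, $R_i^TA_i^{-1}R_i$ has to be identified with $[\mathrm{BlockDiag}_{\cdot,\cdot,\cdot}(A_n)]^{-1}D_n(a_i)$, and this requires the complementary diagonal blocks to be invertible. If they are not, I would replace them with the identity, which amounts to a correction supported where $a_i=0$ and is killed by $D_n(a_i)$. Alternatively, one can invoke the pseudoinverse version of \textbf{GLT3*} together with \cite[Theorem 4.1]{GLH} with a fixed number of blocks. Also, $D_n(a_i)$ matches the grid selection only up to $O(1)$ indices at the interfaces. This discrepancy is a low-rank, hence zero-distributed, perturbation, and \textbf{GLT2*} absorbs it.
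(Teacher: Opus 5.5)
Your proposal is correct and takes essentially the same route as the paper. You write $R_i^TA_i^{-1}R_i$ as an inverted $\mathrm{BlockDiag}$ of $A_n$ times $D_n(a_i)$, use \cite[Theorem 4.1]{GLH} and the $*$-algebra axioms to obtain the symbol $\bigl(1-(1-a_2)(1-a_1)\bigr)\kappa^{-1}=(a_1+a_2-a_1a_2)\kappa^{-1}=\kappa^{-1}$ for $P_n^{-1}$, and then invert. Your extra remarks make explicit details the paper leaves implicit: the zero-distributed iteration matrix, the pseudoinverse passage from $P_n^{-1}$ to $P_n$, the treatment of possibly singular complementary blocks, and the interface mismatch (which only needs to involve $o(d_n)$ indices, not $O(1)$).
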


\begin{proof}
For the multiplicative version we invoke the explicit expression in (\ref{mul_pre_only-nu}). In fact, the proof uses the same arguments as in Theorem \ref{overlapping theorem - negative}, using again the $*$-algebra structure of the GLT matrix-sequences.
Looking indeed at the related matrix-sequence $\left\{P_n^{-1}\right\}_n$ as in (\ref{eq:add_mul_pre-nu}), the GLT symbol is
\begin{eqnarray*}
\left(1-\left(1-a_2\kappa^{-1}\kappa\right)\left(1-a_1\kappa^{-1}\kappa\right) \right) \kappa^{-1} & = &
  \left(a_1+a_2 - a_1a_2\right) \kappa^{-1} = \kappa^{-1},
\end{eqnarray*}
since $a_1+a_2 - a_1a_2=1$ both in the overlapping and non-overlapping setting.

The case of $\nu>2$ subdomains follows the same steps, taking into consideration the admissibility conditions in Remark \ref{rem:non and overlap with nu subd}.
\end{proof}

\begin{theorem}\label{overlapping theorem - positive}
	Let $\nu \geq 1$, be a fixed integer, and let $\{d_n\}_n$ be a sequence of positive integers tending to $\infty$. For every $n$, let
	\[
	(n_1, \ldots, n_{3\nu-2}) = (n_1(n), \ldots, n_{3\nu-2}(n))
	\]
	be a partition of $d_n$. Assume that $\left\{ A_n \right\}_n\sim_{\mathrm{GLT}} \kappa$ for some measurable function $\kappa$. Then
	\[
	\left\{ \operatorname{P_{\star, n}^{(\nu)}}(A_{n}) \right\}_n \sim_{\mathrm{GLT}} \kappa,
	\]
	where $\star \in \{\mathrm{BRAS},\, \mathrm{BRMS} \}$ and with $\nu$ subdomains with overlapping.
\end{theorem}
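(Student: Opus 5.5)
The plan is to mirror the proof of Theorem \ref{overlapping theorem - negative} in the case $\nu=2$, replacing the prolongations $R_i^T$ by $\widehat R_i^T$. The goal is to show that the inverse preconditioner has symbol $\kappa^{-1}$; the claim then follows from Axiom \textbf{GLT3*.}, fourth part. Throughout I assume $\kappa$ invertible a.e., as in the previous two theorems; this is needed to write $P_n^{-1}$ and to invert its symbol.

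The key observation concerns the restricted prolongation. $\widehat R_i^T$ is $R_i^T$ with the overlap rows zeroed out, so $\widehat R_i^T A_i^{-1} R_i$ is obtained from $R_i^T A_i^{-1} R_i$ by left-multiplication with a diagonal $0/1$ matrix. For $\nu=2$ we write
\[
\widehat R_1^T A_1^{-1} R_1 = D_n(\tilde a_1)\left[\mathrm{BlockDiag}_{\hat d_1(n),d_n-\hat d_1(n)}(A_n)\right]^{-1} D_n(a_1),
\]
\[
\widehat R_2^T A_2^{-1} R_2 = D_n(\tilde a_2)\left[\mathrm{BlockDiag}_{d_n-\hat d_2(n),\hat d_2(n)}(A_n)\right]^{-1} D_n(a_2).
\]
Here $\tilde a_1,\tilde a_2$ are the characteristic functions of the nonoverlapping pieces, split at a point of $S_1\cup T_1$ according to the choice of rows kept. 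They satisfy $\tilde a_1+\tilde a_2=1$ a.e. and $\tilde a_i a_i=\tilde a_i$. The sampling matrices may differ from the exact $0/1$ diagonals by $O(1)$ entries near the breakpoints. This is a low-rank, hence zero-distributed, correction, and it is absorbed by \textbf{GLT2*.}, third part, together with \textbf{GLT3*.}.

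With this representation the symbol computation goes as follows. By \cite[Theorem 4.1]{GLH} and \textbf{GLT3*.}, the sequence $\{\widehat R_i^T A_i^{-1} R_i\}_n$ has symbol $\tilde a_i\kappa^{-1}a_i=\tilde a_i\kappa^{-1}$, using that scalar symbols commute.

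For BRAS we get
\[
\Big\{\sum_i \widehat R_i^T A_i^{-1}R_i\Big\}_n\sim_{\mathrm{GLT}} (\tilde a_1+\tilde a_2)\kappa^{-1}=\kappa^{-1}.
\]
This is exactly where the restriction repairs the factor $a_1+a_2$ that appeared in Theorem \ref{overlapping theorem - negative}.

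For BRMS, the preconditioner inverse is $(I-T_{\mathrm{BRMS},n})A_n^{-1}$. Its symbol is
\[
\big(1-(1-\tilde a_2)(1-\tilde a_1)\big)\kappa^{-1}=(\tilde a_1+\tilde a_2-\tilde a_1\tilde a_2)\kappa^{-1}=\kappa^{-1},
\]
since $\tilde a_1\tilde a_2=0$ a.e. Inverting via \textbf{GLT3*.}, fourth part, gives $\{P_n\}_n\sim_{\mathrm{GLT}}\kappa$ in both cases.

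For $\nu>2$ the same steps go through. Each interior $A_i^{-1}$ is embedded via a three-block $\mathrm{BlockDiag}$ of $A_n$, whose symbol is again $\kappa$ by \cite[Theorem 4.1]{GLH}. The functions $\tilde a_i$ then form an a.e. partition of unity by \eqref{eq:partition-unity-nu}, and their pairwise products vanish a.e. For the product formula this gives $1-\prod_i(1-\tilde a_i)=\sum_i\tilde a_i=1$ a.e.

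I expect the main obstacle to be the first step: justifying rigorously that $R_i^T A_i^{-1}R_i$ equals a diagonal sampling matrix times the inverse of a $\mathrm{BlockDiag}$ of $A_n$. This identity needs the complementary diagonal block to be invertible, or else the Moore--Penrose pseudoinverse, as allowed in Section \ref{sec3:glt th}. It also needs the partition sizes $n_j(n)/d_n$ to converge, so that the breakpoints of $a_i,\tilde a_i$ are well defined. I would handle this by assuming convergent ratios and absorbing boundary discrepancies into zero-distributed terms.
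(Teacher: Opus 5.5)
Your proposal is correct and follows essentially the same route as the paper: you factor each restricted local solve as $D_n(\hat a_i)\,[\mathrm{BlockDiag}(A_n)]^{-1}D_n(a_i)$, invoke \cite[Theorem 4.1]{GLH} together with axioms \textbf{GLT2*}--\textbf{GLT3*}, and conclude from the fact that the restricted characteristic functions form an a.e.\ partition of unity. You are in fact more careful than the paper in several places: the paper leaves the BRMS symbol computation $(\hat a_1+\hat a_2-\hat a_1\hat a_2)\kappa^{-1}=\kappa^{-1}$ implicit, it silently uses the a.e.\ invertibility of $\kappa$ (needed for \textbf{GLT3*}, fourth part, but absent from the statement), and it tacitly assumes convergent block ratios $n_j(n)/d_n$ so that the sampling matrices match the $0/1$ projections up to a zero-distributed correction (of rank $o(d_n)$ in general, not $O(1)$ as you wrote).
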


\begin{proof}

	As for Theorem \ref{overlapping theorem - negative}, we start by considering the case of two subdomains, i.e., $\nu = 2$. The proof is contained in the following observation: the action of the restricted operators changes $a_1$ into $\hat a_1$ and $a_2$ into $\hat a_2$, $\hat a_i$, $i=1,2$, being the characteristic functions of ${\Omega_i\setminus S_i}$ , $i=1,2$, $S_1\cup S_2=\Omega_1\cap \Omega_2$, $S_1\cap S_2=\emptyset$. Therefore $\psi=\kappa\sum_{i=1}^\nu \hat a_i$ coincides with $\kappa$, the sum $\sum_{i=1}^\nu \hat a_i$ becoming again a partition of the unity as in the non-overlapping case.

	More in detail, let $\{A_{n}\}_n\sim_{\mathrm{GLT}} \kappa$ for some measurable function $\kappa$.
Following (\ref{eq:TRAS-nu}), we have
\[
T_{\text{BRAS}, n}^{(\nu = 2)}(A_n) = I - \sum_{i=1}^{2} \widehat{R}_i^T A_i^{-1} R_i A_n,
\]
where
\[
\widehat{R}_1^T A_1^{-1} R_1=
\begin{bmatrix}
		I_{{d}_1(n)} & O \\
		O & O
\end{bmatrix}
	 	\begin{bmatrix}
		A_1^{-1} & O \\
		O & O
	\end{bmatrix}=\left[\mathrm{BlockDiag}_{\hat{d}_1(n),d_n-\hat{d}_1(n)}(A_n)\right]^{-1} \begin{bmatrix}
		I_{\hat{d}_1(n)} & O \\
		O & O
	\end{bmatrix},
\]
$\hat{d}_1(n)=d_1(n)+n_3$, and
\[
\widehat{R}_2^T A_2^{-1} R_2=
\begin{bmatrix}
		O & O \\
		O & I_{{d}_2(n)}
\end{bmatrix}		
\begin{bmatrix}
		O & O \\
		O & A_2^{-1}
	\end{bmatrix}=\left[\mathrm{BlockDiag}_{d_n-\hat{d}_2(n),\hat{d}_2(n)}(A_n)\right]^{-1}
	\begin{bmatrix}
		O & O \\
		O & I_{\hat{d}_2(n)}
	\end{bmatrix},
\]
$\hat{d}_2(n)=d_2(n)+n_2$, with $d_n=d_1(n)+d_2(n)$, $d_1(n)=n_1+n_2$, $d_2(n)=n_3+n_4$, so that $d_n-\hat{d}_1(n)=n_4$ and $d_n-\hat{d}_2(n)=n_1$.
The right-hand sides of the latter two equations are very useful for deducing the GLT structure since
\begin{eqnarray*}
\widehat{R}_1^T A_1^{-1} R_1  & = & D_n(\hat a_1)\left[\mathrm{BlockDiag}_{\hat{d}_1(n),d_n-\hat{d}_1(n)}(A_n)\right]^{-1} D_n(a_1), \\
\widehat{R}_2^T A_2^{-1} R_2  & = & D_n(\hat a_2)\left[\mathrm{BlockDiag}_{d_n-\hat{d}_2(n),\hat{d}_2(n)}(A_n)\right]^{-1} D_n(a_2),
\end{eqnarray*}
$a_i$ being the characteristic function of the domain $\Omega_i$, $i=1,2$, $D_n(a_i)$, $D_n(\hat a_i)$, $i=1,2$, as in Axiom \textbf{GLT2*.}, second part, for which
\[
\left\{ D_n(a_i) \right\}_n\sim_{\mathrm{GLT}} a_i,\ \ \left\{ D_n(\hat a_i) \right\}_n\sim_{\mathrm{GLT}} \hat a_i,\quad i=1,2.
\]
Since $\left\{ A_n \right\}_n\sim_{\mathrm{GLT}} \kappa$, by Theorem {\cite[Theorem 4.1]{GLH}} we deduce
\[
\left\{\mathrm{BlockDiag}_{\hat{d}_1(n),d_n-\hat{d}_1(n)}(A_n) \right\}_n\sim_{\mathrm{GLT}} \kappa, \quad \
\left\{\mathrm{BlockDiag}_{d_n-\hat{d}_2(n),\hat{d}_2(n)}(A_n) \right\}_n\sim_{\mathrm{GLT}} \kappa.
\]
Therefore, using Axiom \textbf{GLT3*.}, fourth part, and Axiom \textbf{GLT3*.}, third part, we obtain
\[
\{\widehat{R}_1^T A_1^{-1} R_1\}_n\sim_{\mathrm{GLT}} \hat a_1 a_1\kappa^{-1}=\hat a_1 \kappa^{-1},
\quad\ \{\widehat{R}_2^T A_2^{-1} R_2\}_n\sim_{\mathrm{GLT}}
\hat a_2 a_2\kappa^{-1}=\hat a_2 \kappa^{-1},
\]
so that  $\left\{ \sum_{i=1}^{2} \widehat{R}_i^T A_i^{-1} R_i A_n\right\}_n\sim_{\mathrm{GLT}} \hat a_1+\hat a_2=1$, by invoking Axiom \textbf{GLT3*.}, second part and third part.
As a consequence, we have
\[
\left\{ \operatorname{P_{\star, n}^{(\nu)}}(A_{n}) \right\}_n \sim_{\mathrm{GLT}} \kappa.
\]

Due to the admissibility conditions in Remark \ref{rem:non and overlap with nu subd}, the case of $\nu>2$ subdomains can be treated in the same way since the overlapping holds at most for consecutive subdomains $\Omega_i$ and $\Omega_{i+1}$ with $i\in {\cal OV}$: hence the proof with $\nu>2$ is given by iterating the same reasoning as for $\nu=2$ for every $i\in {\cal OV}$.
\end{proof}

\begin{remark}\label{s-t>1}
	The statements of Theorems \ref{non overlapping theorem}, \ref{overlapping theorem - negative}, \ref{overlapping theorem - positive-M}, \ref{overlapping theorem - positive} have been restricted to scalar-valued GLT symbols, notwithstanding the fact that the underlying theory holds at the full level of generality \cite{Barbarino2020,Barbarino2020-bis,GSS,GLH}.
	Nevertheless, in a number of relevant settings -- Galerkin or collocation discretizations within Isogeometric Analysis with intermediate smoothness, hybrid schemes, Discontinuous Galerkin methods, or high-order Finite Elements -- the resulting matrix structures give rise to matrix-sequences that still fall under the GLT framework, but now with matrix-valued GLT symbols \cite{GSS,doro,dumbser,tom}.
	In these cases the dimension of the matrix-valued symbol equals $\mu (p-k)^d$ (as explained in the introductory part of \cite{gacs}, and see also \cite{Barbarino2020,Barbarino2020-bis,GSS,doro,dumbser,tom}), with $d$ denoting the dimension of the physical domain, $p$ the polynomial degree, $k$ the global regularity attained by the numerical solution, and $\mu$ the number of equations in the PDE system, the latter arising whenever the blocking strategy is employed for systems of PDEs \cite{blocking first,blocking num,JNUN - rational,JNUN - irrational,physical DD}. Beyond this, the setting can be further extended to graded meshes in general domains, as treated e.g. in \cite{graded meshes on nonrect,physical DD,SerraCapizzano2003,SerraCapizzano2006} and references therein.
	That said, revisiting Theorems \ref{non overlapping theorem}, \ref{overlapping theorem - negative}, \ref{overlapping theorem - positive-M}, \ref{overlapping theorem - positive}, one sees that extending them to the matrix-valued case raises no real difficulty: the characteristic functions entering the proofs are themselves scalar-valued, and therefore commute with any matrix-valued symbol, so the arguments carry over unchanged.
\end{remark}

\section{Numerical experiments}\label{sec6:num}

In this section, we demonstrate the efficiency of the block Jacobi/Gauss--Seidel and additive/multiplicative preconditioners for GLT sequences, in agreement with the predictions of our theory. Each example is presented following the procedure outlined below.

\begin{enumerate}

\item[$(A)$] We fix a GLT sequence \(\{A_n\}_n \sim_{\mathrm{GLT}} \kappa\), where each \(A_n\)
is an invertible square matrix of size \(n^d s \times n^d s\), for some fixed
positive integer \(s\), and
\(\kappa : [0,1]^d \times [-\pi,\pi]^d \to \mathbb{C}^{s \times s}\)
is a measurable matrix-valued function.

\item[$(B)$] For increasing values of $\nu$, $n$ and overalp size $o$, the tables report the number of iterations required to solve the linear system $A_n x = \mathbf{1}$ with a tolerance of $10^{-6}$. Results are provided for both the preconditioned conjugate gradient (PCG) method with $P_{\star,n}^{(\nu)}$ and the preconditioned GMRES (PGMRES) method. All solvers are initialized with the zero vector $\mathbf{0}$, and PGMRES is applied without restarting. Entries marked as "nac" indicate \textbf{cases that are not admissible}, this is because the overlap size $o$ cannot exceed the size of the subdomains, in accordance with the admissibility conditions stated in Remark~\eqref{rem:non and overlap with nu subd}. The results clearly confirm the efficiency of Schwarz-type preconditioners in accelerating convergence for both iterative methods, with convergence improving as the overlap increases.

\item [$(C)$] For $s>1$, we plot the graphs of selected continuous eigenvalue functions  $\lambda_i(f)$, $i = 1, \ldots,  s$, associated with the function $f$, together with the eigenvalues of the matrix $A_n$, for a selected value of $n$. The eigenvalues of $A_n$ are subdivided into
$s$ distinct subsets of approximately equal cardinality. The eigenvalues in each subset are then positioned at the nodes of a uniform grid in $[0,\pi]$.

\end{enumerate}

\subsection{The 1D setting}

\begin{example}[Full Toeplitz matrices]\label{F.T.M}
Consider the Toeplitz matrix \(A_n = T_n(f)\) generated by the symbol
\(f(\theta) = |\theta|\).
The corresponding matrix sequence \(\{A_n\}_n\) satisfies
$
\{A_n\}_n \sim_{\mathrm{GLT}} f,
$
by property \textbf{GLT2*}.
For item~(A), we fix the parameter \(s = 1\) and subsequently apply the
procedure described in item~(B).
The resulting numerical results, obtained via various methods, are
summarized in Tables~\ref{tab:1}--\ref{tab:14}.

Figure~\ref{fig:iterative_methods} provides a visual comparison of standard versus preconditioned GMRES. The left plot shows the residuals for unpreconditioned methods, while the right plot shows the residuals for preconditioned methods. The simulation is performed with matrix size $n = 1280$, number of subdomains $\nu = 2$, and overlap $o = 30$.

Figure~\ref{fig:eigenvalues_comparison} compares the eigenvalues for different block preconditioners with matrix size $n = 40$, number of subdomains $\nu = 2$, and overlap $o = 10$. The markers represent the eigenvalues of $A_n$, $P_n^{-1}$, and $P_n^{-1}A_n$ for each method, illustrating the clustering effect induced by the preconditioners and confirming the spectral distribution predictions based on GLT analysis.
\end{example}

\begin{table}[H]
\centering
\resizebox{0.8\textwidth}{!}{%
\begin{tabular}{ccccccccc}
\toprule
\textbf{Method} & $\nu$ & $n=40$ & $n=80$ & $n=160$ & $n=320$ & $n=640$ & $n=1280$ & $n=2560$ \\
\midrule
\multirow{4}{*}{PCG}
 & 2  & 5  & 6  & 6  & 6  & 7  & 7  & 7  \\
 & 4  & 8  & 9  & 10 & 10 & 11 & 12 & 12 \\
 & 8  & 12 & 13 & 14 & 16 & 17 & 18 & 19 \\
 &16  & 19 & 17 & 19 & 21 & 23 & 25 & 28 \\
\midrule
\multirow{4}{*}{PGMRES}
 & 2  & 5  & 6  & 6  & 9  & 7  & 10 & 11 \\
 & 4  & 8  & 12 & 10 & 15 & 16 & 19 & 18 \\
 & 8  & 13 & 14 & 17 & 19 & 21 & 22 & 30 \\
 &16  & 20 & 18 & 21 & 23 & 27 & 30 & 36 \\
\bottomrule
\end{tabular}
}
\caption{Number of iterations for PCG and PGMRES with $P_{BJ, n}^{(\nu )}(A_n)$.}
\label{tab:1}
\end{table}
\begin{table}[H]
\centering
\resizebox{0.8\textwidth}{!}{%
\begin{tabular}{ccccccccc}
\toprule
\textbf{Method} & $\nu$ & $n=40$ & $n=80$ & $n=160$ & $n=320$ & $n=640$ & $n=1280$ & $n=2560$ \\
\midrule

\multirow{4}{*}{PCG}
 & 2  & 4   & 5   & 5   & 5   & 6    & 6    & 6   \\
 & 4  & 8   & 8   & 9   & 9   & 9    & 10   & 11  \\
 & 8 & 8   & 10  & 11  & 12  & 13   & 14   & 15  \\
 & 16 & nac & 10  & 12  & 14  & 16   & 17   & 21  \\
\midrule

\multirow{4}{*}{PGMRES}
 & 2  & 4    & 6    & 5    & 7    & 9     & 6     & 10  \\
 &4  & 8    & 10   & 10   & 10   & 12    & 14    & 16  \\
 & 8  & 8    & 11   & 12   & 13   & 15    & 16    & 20  \\
 & 16 & nac   & 11   & 13   & 15   & 17    & 20    & 21  \\
\bottomrule
\end{tabular}
}
\caption{Number of iterations for PCG and PGMRES with $P_{BAS, n}^{(\nu )}(A_n)$, overlap $o=5$.}
\label{tab:2}
\end{table}

\begin{table}[H]
\centering
\resizebox{0.8\textwidth}{!}{%
\begin{tabular}{ccccccccc}
\toprule
\textbf{Method} & $\nu$ & $n=40$ & $n=80$ & $n=160$ & $n=320$ & $n=640$ & $n=1280$ & $n=2560$ \\
\midrule
\multirow{4}{*}{PCG}
 & 2  & 4   & 4   & 5   & 5   & 5    & 6    & 6   \\
 & 4  & 6   & 8   & 8   & 9   & 9    & 9    & 10  \\
 &8  & nac & 9   & 11  & 11  & 12   & 13   & 14  \\
 & 16 & nac & nac & 10  & 13  & 14   & 16   & 17  \\
\midrule

\multirow{4}{*}{PGMRES}
 & 2  & 5    & 4    & 6    & 5    & 5     & 9     & 6  \\
 & 4  & 7    & 8    & 11   & 10   & 10    & 12    & 14 \\
 & 8  & nac  & 9    & 13   & 16   & 12    & 15    & 17 \\
 & 16 & nac  & nac  & 11   & 14   & 15    & 17    & 20 \\
\bottomrule
\end{tabular} }
\caption{Number of iterations for PCG and PGMRES with $P_{BAS, n}^{(\nu )}(A_n)$ , with overlap $o=10$.}

\label{tab:3}

\end{table}


\begin{table}[H]
\centering
\resizebox{0.8\textwidth}{!}{%
\begin{tabular}{ccccccccc}
\toprule
\textbf{Method} & $\nu$ & $n=40$ & $n=80$ & $n=160$ & $n=320$ & $n=640$ & $n=1280$ & $n=2560$ \\
\midrule

\multirow{4}{*}{PCG}
 & 2 & nac & 3 & 4 & 4 & 5 & 5 & 6 \\
 & 4 & nac & nac & 8 & 8 & 8 & 9 & 9 \\
 & 8 & nac & nac & nac & 10 & 11 & 11 & 13 \\
 & 16 & nac & nac & nac & nac & 12 & 13 & 15 \\
\midrule

\multirow{4}{*}{PGMRES}
 & 2& nac & 3 & 4 & 4 & 5 & 5 & 7 \\
 & 4 & nac & nac & 8 & 10 & 8 & 9 & 13 \\
 & 8 & nac & nac & nac & 12 & 11 & 13 & 15 \\
 & 16 & nac & nac & nac & nac & 13 & 15 & 16 \\
\bottomrule
\end{tabular}}
 \caption{Number of iterations for PCG and PGMRES with $P_{BAS, n}^{(\nu )}(A_n)$, with overlap $o=30$.}
\label{tab:4}
\end{table}

\begin{table}[H]
\centering
\resizebox{0.8\textwidth}{!}{%
\begin{tabular}{llccccccc}
\toprule
\textbf{Method} & $\nu$ & $n=40$ & $n=80$ & $n=160$ & $n=320$ & $n=640$ & $n=1280$ & $n=2560$ \\

\midrule

\multirow{4}{*}{PGMRES} & 2 & 4 & 4 & 4 & 6 & 5 & 5 & 8 \\
 & 4 & 5 & 5 & 6 & 7 & 10 & 11 & 12 \\
 & 8 & 7 & 8 & 10 & 12 & 11 & 13 & 16 \\
 & 16 & nac & 11 & 11 & 12 & 15 & 17 & 20 \\
\bottomrule
\end{tabular}%
}
\caption{ Number of iterations for PGMRES with $P_{BRAS, n}^{(\nu )}(A_n)$, with overlap $o=5$.}
\label{tab:5}
\end{table}

\begin{table}[H]
\centering
\resizebox{0.8\textwidth}{!}{%
\begin{tabular}{llccccccc}
\toprule
\textbf{Method}  & $\nu$ & $n=40$ & $n=80$ & $n=160$ & $n=320$ & $n=640$ & $n=1280$ & $n=2560$ \\

\midrule
\multirow{4}{*}{PGMRES}
& 2 & 3 & 4 & 4 & 4 & 6 & 5 & 5 \\
 & 4 & 5 & 5 & 5 & 6 & 7 & 11 & 12 \\
 & 8 & nac & 8 & 7 & 8 & 13 & 12 & 14 \\
 & 16 & nac & nac & 12 & 11 & 12 & 15 & 18 \\
\bottomrule
\end{tabular}%
}
\caption{Number of iterations for PCG and PGMRES with $P_{BRAS, n}^{(\nu )}(A_n)$, with overlap $o=10$.}
\label{tab:6}
\end{table}

\begin{table}[H]
\centering
\resizebox{0.8\textwidth}{!}{%
\begin{tabular}{llccccccc}
\toprule
\textbf{Method} & $\nu$ & $n=40$ & $n=80$ & $n=160$ & $n=320$ & $n=640$ & $n=1280$ & $n=2560$ \\
\midrule

\multirow{4}{*}{PGMRES}
 & 2  & nac & 3   & 3   & 4   & 4   & 4    & 7 \\
 & 4  & nac & nac & 5   & 5   & 8   & 8    & 9 \\
 & 8  & nac & nac & nac & 8   & 8   & 9    & 11 \\
 & 16 & nac & nac & nac & nac & 11  & 12   & 14 \\
\bottomrule
\end{tabular}%
}
 \caption{Number of iterations for PCG and PGMRES with $P_{BRAS, n}^{(\nu )}(A_n)$, with overlap $o=30$.}
\label{tab:7}
\end{table}

\begin{table}[!htbp]
\centering
\resizebox{0.8\textwidth}{!}{%
\begin{tabular}{ccccccccc}
\toprule
Method & $\nu$ & $n=40$ & $n=80$ & $n=160$ & $n=320$ & $n=640$ & $n=1280$ & $n=2560$ \\
\midrule
\multirow{4}{*}{PGMRES}
 & 2 & 4 & 5 & 7 & 5 & 5 & 8 & 9 \\
 & 4 & 7 & 10 & 8 & 11 & 9 & 14 & 17 \\
 & 8 & 10 & 13 & 14 & 16 & 16 & 19 & 17 \\
 & 16 & 15 & 15 & 17 & 19 & 21 & 22 & 25 \\
\bottomrule
\end{tabular}%
}
\caption{Number of iterations for PGMRES with $P_{BGS, n}^{(\nu )}(A_n)$.}
\label{tab:8}
\end{table}

\begin{table}[H]
\centering
\resizebox{0.8\textwidth}{!}{%
\begin{tabular}{llccccccc}
\toprule
\textbf{Method} & $\nu$ & $n=40$ & $n=80$ & $n=160$ & $n=320$ & $n=640$ & $n=1280$ & $n=2560$ \\
\midrule

\multirow{4}{*}{PGMRES}& 2 & 3 & 3 & 4 & 4 & 4 & 4 & 6 \\
 & 4 & 4 & 5 & 7 & 7 & 7 & 8 & 8 \\
 & 8 & 5 & 7 & 8 & 9 & 11 & 12 & 14 \\
 & 16 & nac & 8 & 11 & 14 & 15 & 15 & 17 \\
\bottomrule
\end{tabular}%
}
\caption{Number of iterations for PGMRES with $P_{BMS, n}^{(\nu )}(A_n)$, with overlap $o=5$.}
\label{tab:9}
\end{table}

\begin{table}[H]
\centering
\resizebox{0.8\textwidth}{!}{%
\begin{tabular}{llccccccc}
\toprule
\textbf{Method} & $\nu$ & $n=40$ & $n=80$ & $n=160$ & $n=320$ & $n=640$ & $n=1280$ & $n=2560$ \\
\midrule

\multirow{4}{*}{PGMRES}& 2 & 3 & 3 & 3 & 4 & 4 & 4 & 5 \\
 & 4 & 3 & 4 & 5 & 7 & 7 & 7 & 9 \\
 & 8 & nac & 5 & 7 & 9 & 9 & 11 & 12 \\
 & 16 & nac & nac & 8 & 11 & 14 & 15 & 15 \\
\bottomrule
\end{tabular}}
 \caption{Number of iterations for GMRES with $P_{BMS, n}^{(\nu )}(A_n)$, with overlap $o=10$.}
\label{tab:10}
\end{table}

\begin{table}[H]
\centering
\resizebox{0.8\textwidth}{!}{%
\begin{tabular}{llccccccc}
\toprule
\textbf{Method} & $\nu$ & $n=40$ & $n=80$ & $n=160$ & $n=320$ & $n=640$ & $n=1280$ & $n=2560$ \\
\midrule
\multirow{4}{*}{PGMRES} & 2 & nac & 2 & 3 & 3 & 3 & 4 & 4 \\
 & 4 & nac & nac & 4 & 5 & 5 & 6 & 9 \\
 & 8 & nac & nac & nac & 6 & 8 & 9 & 9 \\
 & 16 & nac & nac & nac & nac & 9 & 13 & 15 \\
\bottomrule
\end{tabular}}
\caption{Number of iterations for PGMRES with $P_{BMS, n}^{(\nu )}(A_n)$, with overlap $o=30$.}
\label{tab:11}
\end{table}

\begin{table}[H]
\centering
\resizebox{0.8\textwidth}{!}{%
\begin{tabular}{llccccccc}
\toprule
\textbf{Method} & $\nu$ & $n=40$ & $n=80$ & $n=160$ & $n=320$ & $n=640$ & $n=1280$ & $n=2560$ \\

\midrule

\multirow{4}{*}{PGMRES} & 2 & 3 & 3 & 5 & 4 & 4 & 5 & 6 \\
 & 4 & 6 & 5 & 6 & 7 & 7 & 8 & 8 \\
 & 8 & 7 & 8 & 10 & 9 & 11 & 13 & 15 \\
 & 16 & nac & 10 & 14 & 15 & 16 & 16 & 17 \\
\bottomrule
\end{tabular}}
 \caption{Number of iterations for PGMRES with $P_{BRMS, n}^{(\nu )}(A_n)$, with overlap $o=5$.}
\label{tab:12}
\end{table}

\begin{table}[H]
\centering
\resizebox{0.8\textwidth}{!}{%
\begin{tabular}{llccccccc}
\toprule
\textbf{Method} & $\nu$ & $n=40$ & $n=80$ & $n=160$ & $n=320$ & $n=640$ & $n=1280$ & $n=2560$ \\
\midrule

\multirow{4}{*}{PGMRES} & 2 & 3 & 3 & 4 & 5 & 4 & 4 & 5 \\
 & 4 & 4 & 5 & 5 & 6 & 8 & 7 & 9 \\
 & 8 & nac & 7 & 8 & 10 & 9 & 12 & 13 \\
 & 16 & nac & nac & 10 & 14 & 15 & 16 & 16 \\
\bottomrule
\end{tabular} }
 \caption{Number of iterations for PGMRES with $P_{BRMS, n}^{(\nu )}(A_n)$, with overlap $o=10$.}
\label{tab:13}
\end{table}

\begin{table}[H]
\centering
\resizebox{0.8\textwidth}{!}{%
\begin{tabular}{llccccccc}
\toprule
\textbf{Method} & $\nu$ & $n=40$ & $n=80$ & $n=160$ & $n=320$ & $n=640$ & $n=1280$ & $n=2560$ \\
\midrule

\multirow{4}{*}{PGMRES}  & 2 & nac & 3 & 3 & 3 & 5 & 4 & 4 \\
 & 4 & nac & nac & 5 & 5 & 7 & 6 & 7 \\
 & 8 & nac & nac & nac & 8 & 9 & 9 & 11 \\
 & 16 & nac & nac & nac & nac & 12 & 15 & 16 \\
\bottomrule
\end{tabular}}
 \caption{Number of iterations for PGMRES with $P_{BRMS, n}^{(\nu )}(A_n)$, with overlap $o=30$.}
\label{tab:14}
\end{table}

\begin{figure}[H]
    \centering
    \includegraphics[width=.6\textwidth]{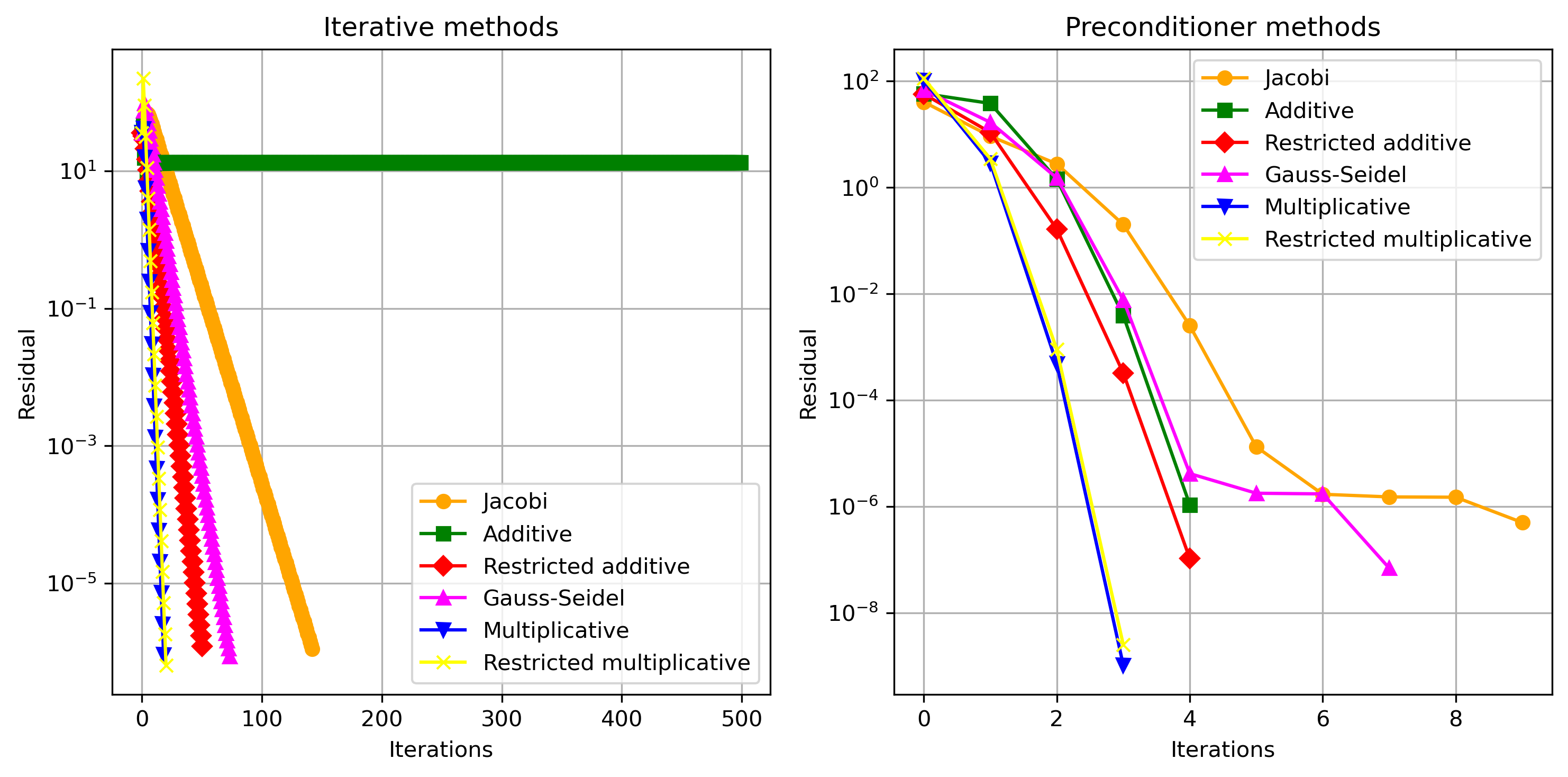}
    \caption{Comparison of iterative and preconditioner methods using GMRES. The left plot shows the residuals for standard iterative methods, while the right plot shows residuals for preconditioned methods. The simulation is performed with matrix size $n=1280$, number of subdomains $\nu=2$, and overlap $o=30$.}
    \label{fig:iterative_methods}
\end{figure}

\begin{figure}[H]
    \centering
    \includegraphics[width=0.6\textwidth]{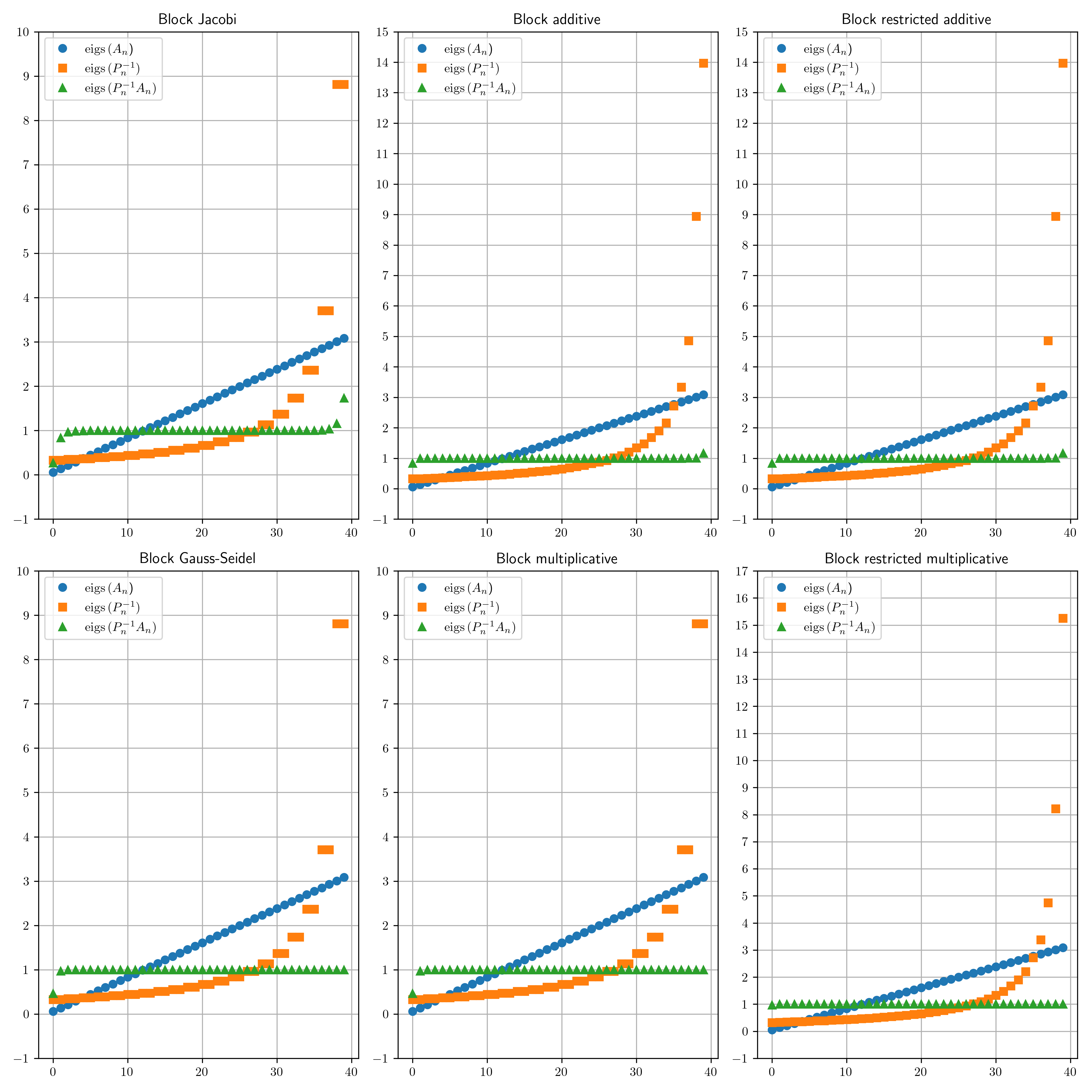}
    \caption{Comparison of eigenvalues for different block methods with matrix size $n=40$, number of subdomains $\nu=2$, and overlap $o=10$. The markers represent the eigenvalues of $A_n$, $P_n^{-1}$, and $P_n^{-1}A_n$ for each method.}
    \label{fig:eigenvalues_comparison}
\end{figure}

\begin{remark}
Both multiplicative and restricted additive Schwarz preconditioners are, in general, not Hermitian. This point is crucial when selecting an iterative solver: GMRES can be applied directly, whereas CG requires an HPD preconditioner.
\end{remark}

\begin{example}[Finite element methods with $s = 1$]\label{FEM}
Consider the one-dimensional elliptic boundary value problem
\begin{equation}\label{eq:poisson1}
\begin{cases}
-\bigl(a(x) u'(x)\bigr)' = f(x), & x \in (0,1),\\[2mm]
u(0) = u(1) = 0.
\end{cases}
\end{equation}
We assume that the diffusion coefficient \(a : [0,1] \to \mathbb{R}\) is
continuous and uniformly positive, that is,
\(
a \in C([0,1]),\,
 a(x)>0
\, \text{for all } x \in [0,1],
\)
moreover, we assume that the source term satisfies \(f \in L^2(0,1)\).

The weak formulation of \eqref{eq:poisson1} reads as follows: find
\(u \in H_0^1(0,1)\) such that
\begin{equation}
\int_0^1 a(x)\,u'(x)\,v'(x)\,dx
=
\int_0^1 f(x)\,v(x)\,dx,
\qquad \forall\, v \in H_0^1(0,1).
\end{equation}

We now introduce a standard finite element discretization. Let $n \in \mathbb{N}$ and set
\[
h = \frac{1}{n+1}, \qquad x_i = i h, \quad i = 0,\ldots,n+1,
\]
which defines a uniform partition of the interval $[0,1]$,
\(
0 = x_0 < x_1 < \cdots < x_{n+1} = 1.
\)
We consider the classical linear finite element space
\[
V_h
=
\left\{
v \in H_0^1(0,1)
\;\middle|\;
v|_{[x_i,x_{i+1})} \in \mathbb{P}_1,
\quad i = 0,\ldots,n
\right\},
\]
which consists of continuous, piecewise affine functions vanishing at the boundary. The space $V_h$ admits the nodal basis
\(
V_h = \operatorname{span}\{\varphi_1,\ldots,\varphi_n\},
\)
where $\varphi_i$ is the standard hat function associated with the interior node $x_i$, defined by
\begin{equation}\label{eq:hat-functions}
\varphi_i(x) =
\begin{cases}
\dfrac{x - x_{i-1}}{x_i - x_{i-1}}, & x \in [x_{i-1}, x_i), \\
\dfrac{x_{i+1} - x}{x_{i+1} - x_i}, & x \in [x_i, x_{i+1}), \\
0, & \text{otherwise},
\end{cases}
\qquad i = 1,\ldots,n.
\end{equation}
The finite element approximation of \eqref{eq:poisson1} consists in finding $u_h \in V_h$ such that
\begin{equation}
\int_0^1 a(x)\,u_h'(x)\,v'(x)\,dx
=
\int_0^1 f(x)\,v(x)\,dx,
\qquad \forall\, v \in V_h.
\end{equation}
Since $\{\varphi_1,\ldots,\varphi_n\}$ forms a basis of $V_h$, the discrete solution can be written as
\(
u_h(x) = \sum_{j=1}^n u_j \,\varphi_j(x),
\)
where $u = (u_1,\ldots,u_n)^T \in \mathbb{R}^n$ is the vector of unknown coefficients.
By linearity, the finite element problem reduces to solving the linear system
\begin{equation}
K_nu = f,
\end{equation}
where the load vector $f \in \mathbb{R}^n$ is given by
\(
f_i = \int_0^1 f(x)\,\varphi_i(x)\,dx,
\; i = 1,\ldots,n,
\)
and the stiffness matrix $K_n \in \mathbb{R}^{n \times n}$ has entries
\(
(K_n){ij} = \int_0^1 a(x)\,\varphi_j'(x)\,\varphi_i'(x)\,dx,
\; i,j = 1,\ldots,n.
\)

Here, for each \( n \in \mathbb{N} \), the following is also known in this case:

\begin{equation}\label{GLT:relations3}
    \left\{\frac{1}{n+1}K_n\right\}_n \sim_{\mathrm{GLT}} a(x)\,\bigl(2 - 2\cos\theta\bigr).
\end{equation}
\begin{equation}\label{GLT:relation4}
    \left\{\frac{1}{n+1}K_n\right\}_n \sim_{\mathrm{\lambda,\sigma}} a(x)\,\bigl(2 - 2\cos\theta\bigr).
\end{equation}
Therefore, both the case-by-case theoretical results established for each configuration and the numerical evidence reported in the accompanying figures confirm that all classical block preconditioners exhibit the predicted asymptotic behavior
\[
\{P_{\star,n}^{(\nu)}\}_n \sim_{\mathrm{GLT}} a(x)\,\bigl(2 - 2\cos\theta\bigr),
\qquad
\star \in \{\mathrm{BJ}, \mathrm{BAS}, \mathrm{BRAS}, \mathrm{BGS},
\mathrm{BMS}, \mathrm{BRMS}\}.
\]
This conclusion is in full agreement with the numerical observations
reported in Figures~\ref{Fig: FE 2}--\ref{Fig: FE 11}.

\begin{figure}[H]
    \centering
    \includegraphics[width=0.49\textwidth]{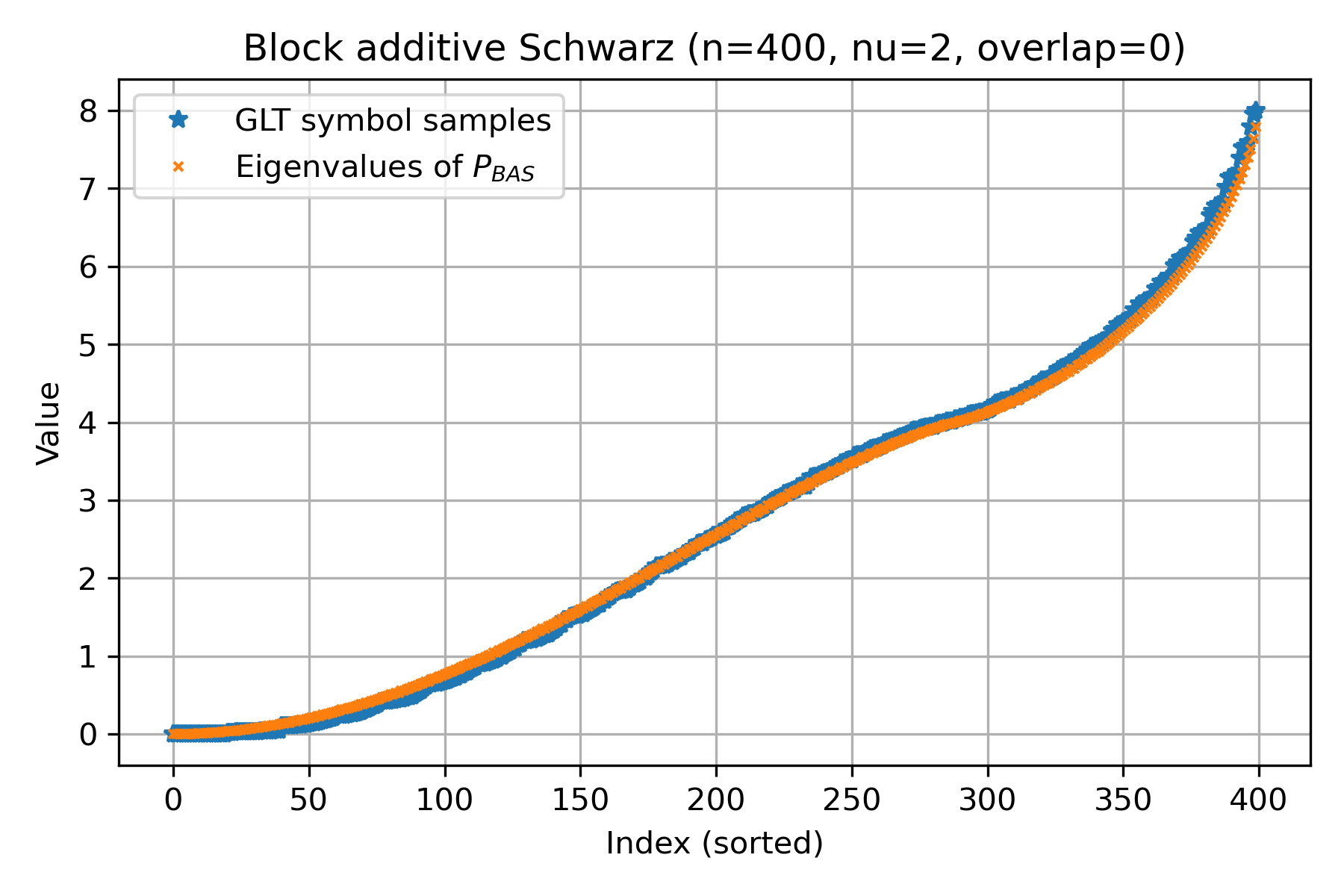}
    \includegraphics[width=0.49\textwidth]{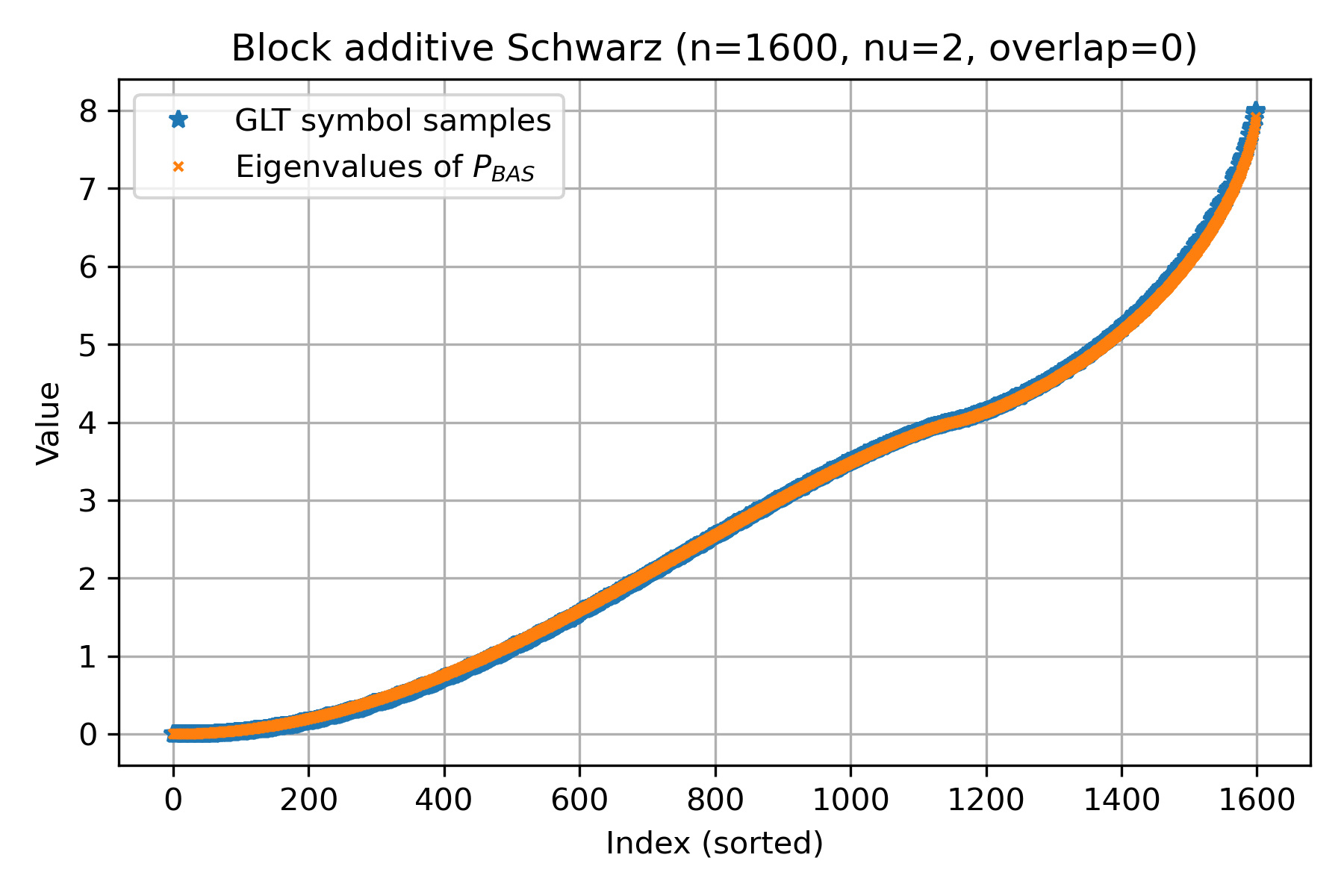}
    \caption{Comparison between the GLT symbol $(1+x^2)\,(2 - 2 \cos \theta)$ and the eigenvalues of the block additive Schwarz preconditioner \(P_{\mathrm{BAS},n}^{(\nu)}\) without overlap.
Left: $n=400$, $\nu=2$. Right: $n=1600$, $\nu=2$.
}

    \label{Fig: FE 2}
\end{figure}

\begin{figure}[H]
    \centering
    \includegraphics[width=0.49\textwidth]{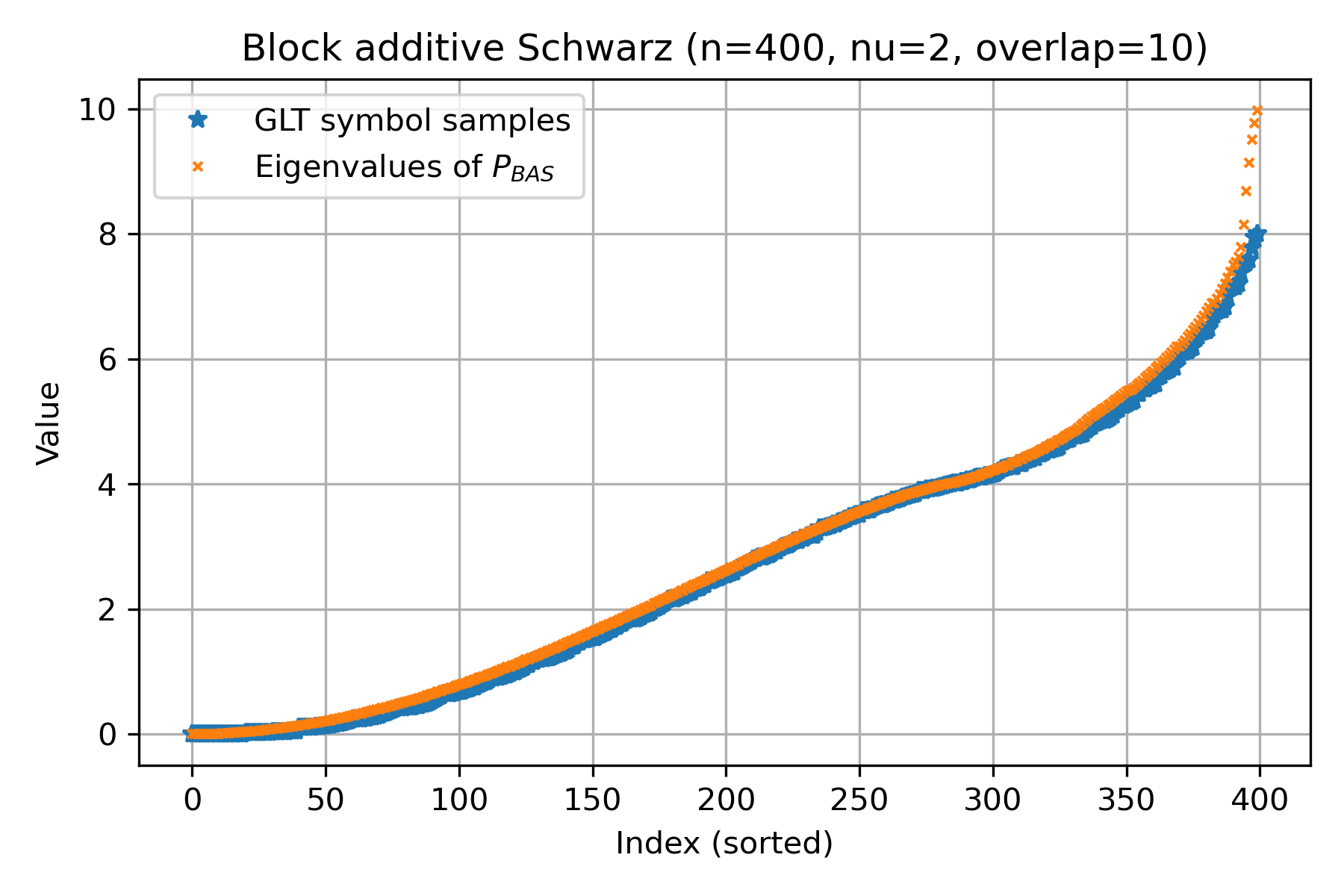}
    \includegraphics[width=0.49\textwidth]{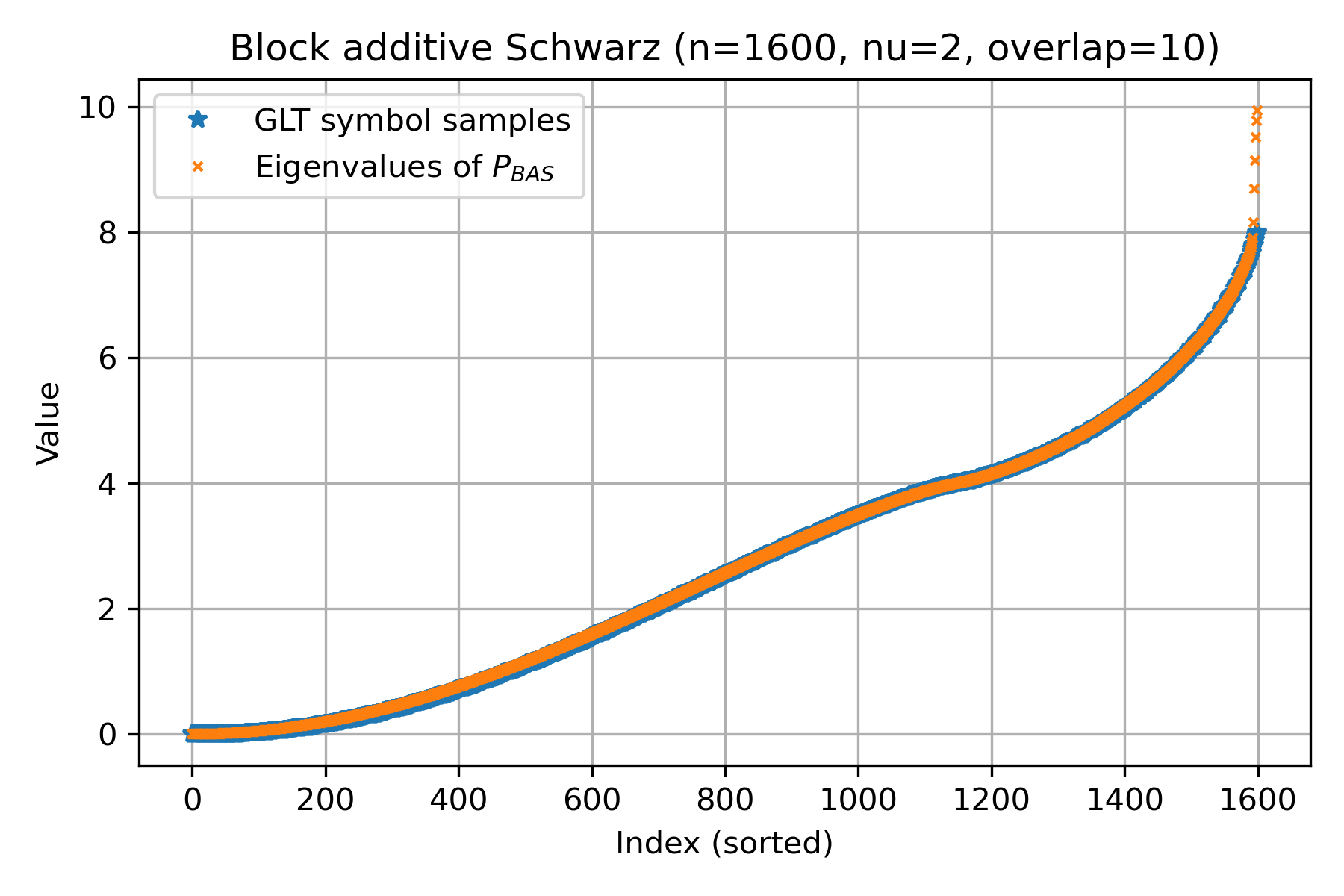}
    \caption{Comparison between the GLT symbol \((1+x^2)\,(2 - 2 \cos \theta)\) and the eigenvalues of the block additive Schwarz preconditioner \(P_{\mathrm{BAS},n}^{(\nu)}\) with overlap $o=10$. Left:$n=400$, $\nu=2$. Right: $n=1600$, $\nu=2$.}

    \label{Fig: FE 4}
\end{figure}

\begin{figure}[H]
    \centering
    \includegraphics[width=0.49\textwidth]{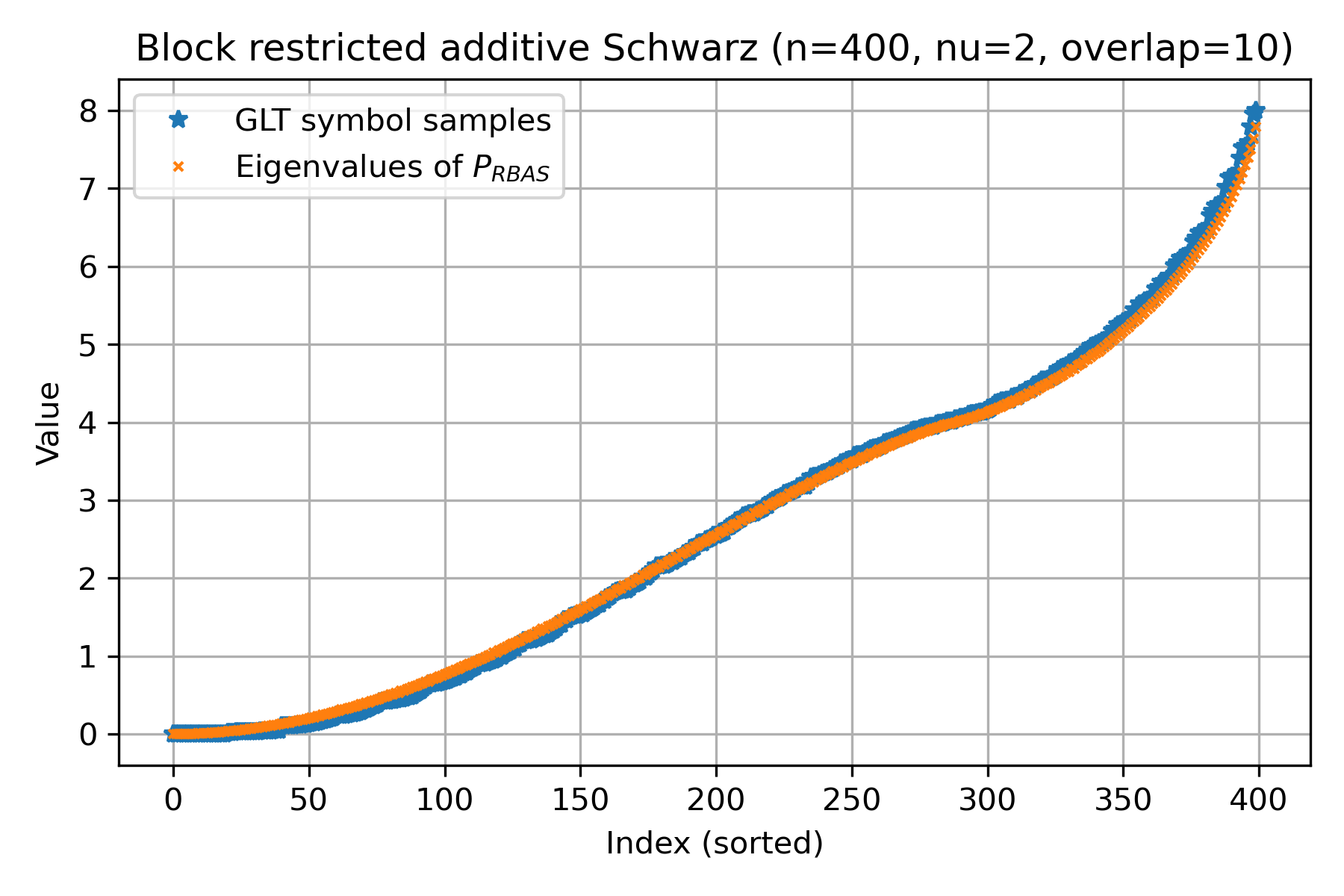}
    \includegraphics[width=0.49\textwidth]{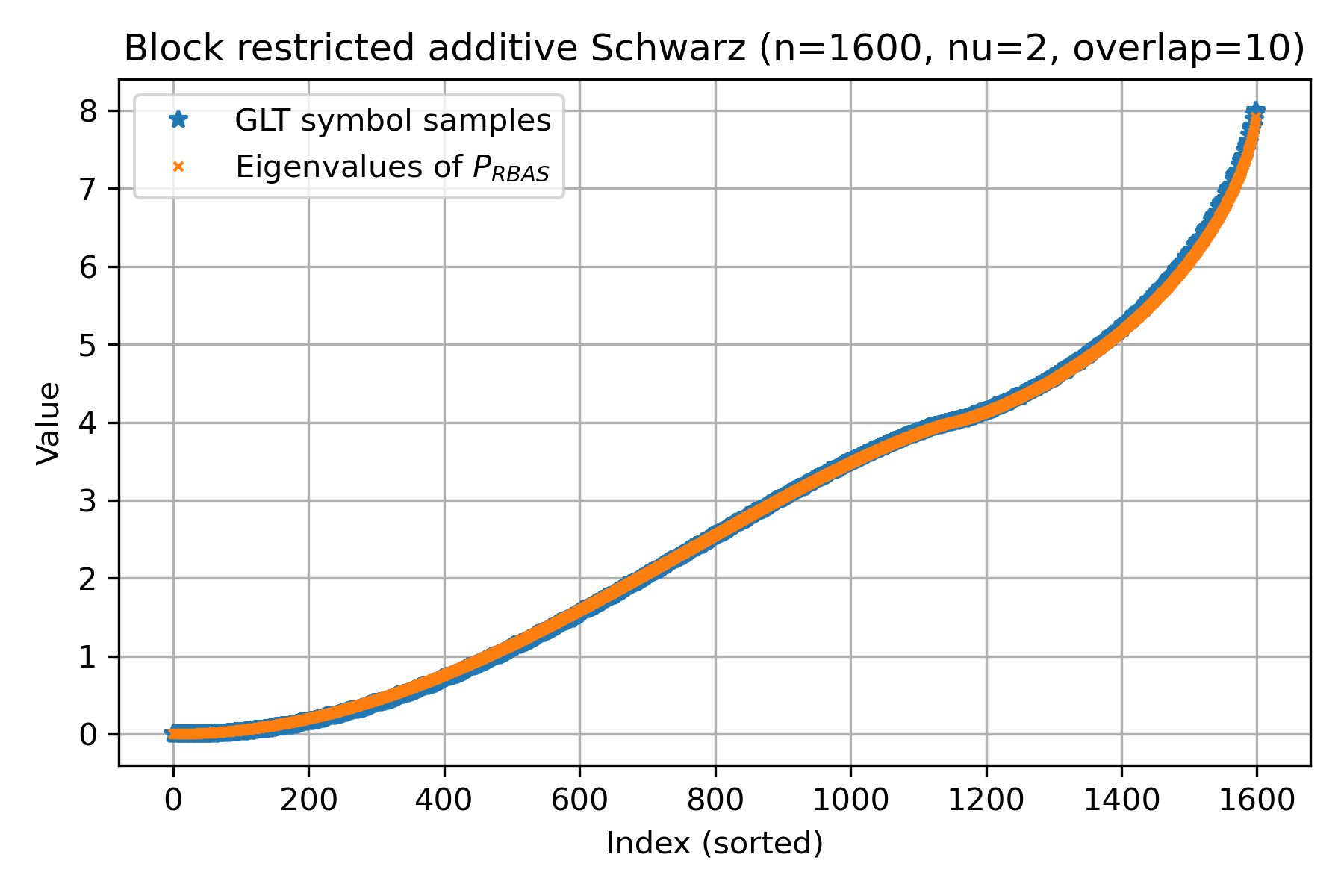}
    \caption{Comparison between the GLT symbol \((1+x^2)\,(2 - 2 \cos \theta)\) and the eigenvalues of the block restricted additive Schwarz preconditioner \(P_{\mathrm{BRAS},n}^{(\nu)}\) with overlap $o=10$.
Left: $n=400$, $\nu=2$. Right: $n=1600$, $\nu=2$.}

    \label{Fig: FE 6}
\end{figure}

\begin{figure}[H]
    \centering
    \includegraphics[width=0.49\textwidth]{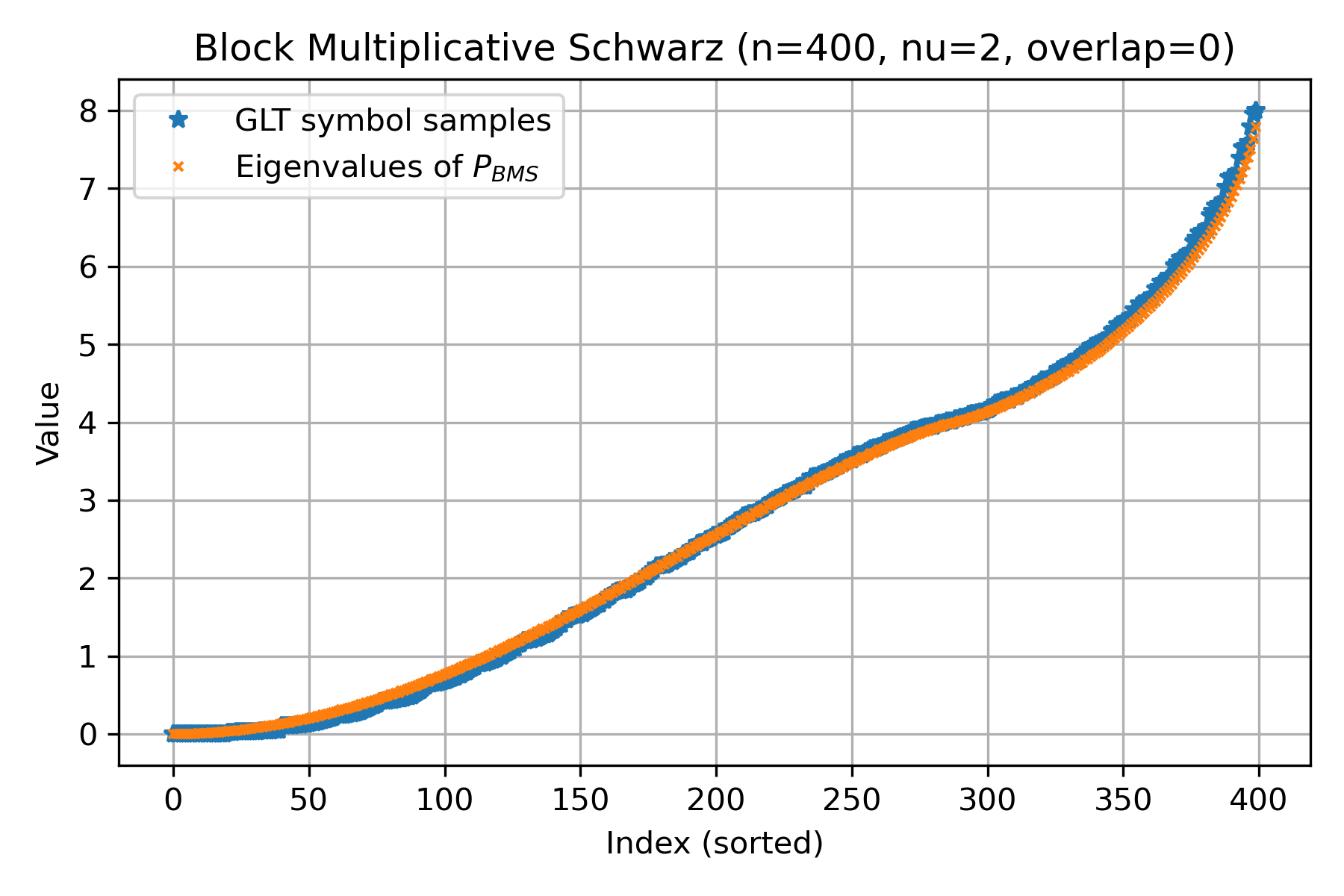}
    \includegraphics[width=0.49\textwidth]{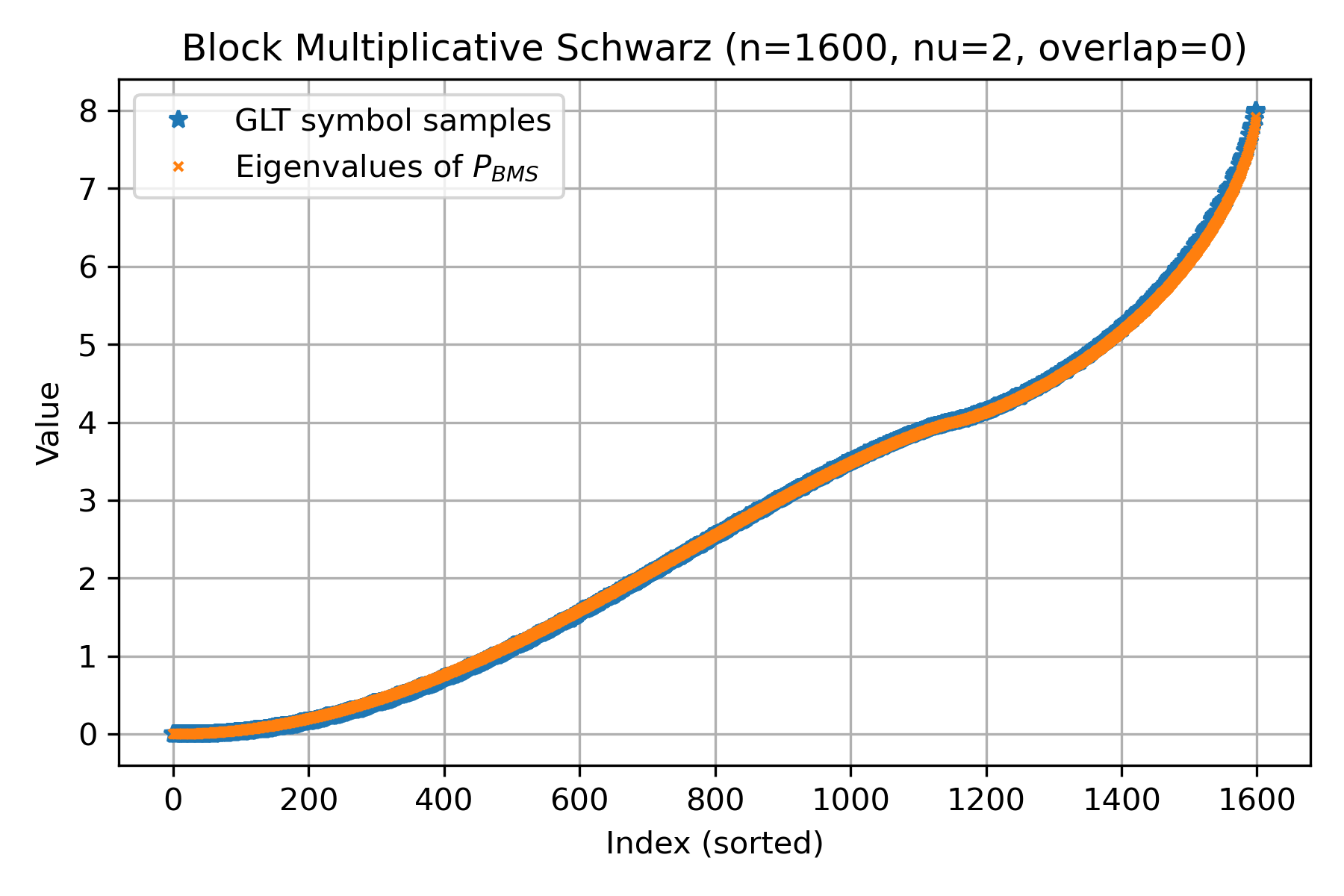}
    \caption{Comparison between the GLT symbol \((1+x^2)\,(2 - 2 \cos \theta)\) and the eigenvalues of the block multiplicative Schwarz preconditioner \(P_{\mathrm{BMS},n}^{(\nu)}\) without overlap.
Left: $n=400$, $\nu=2$. Right: $n=1600$, $\nu=2$.}

    \label{Fig7}
\end{figure}



\begin{figure}[H]
    \centering
    \includegraphics[width=0.49\textwidth]{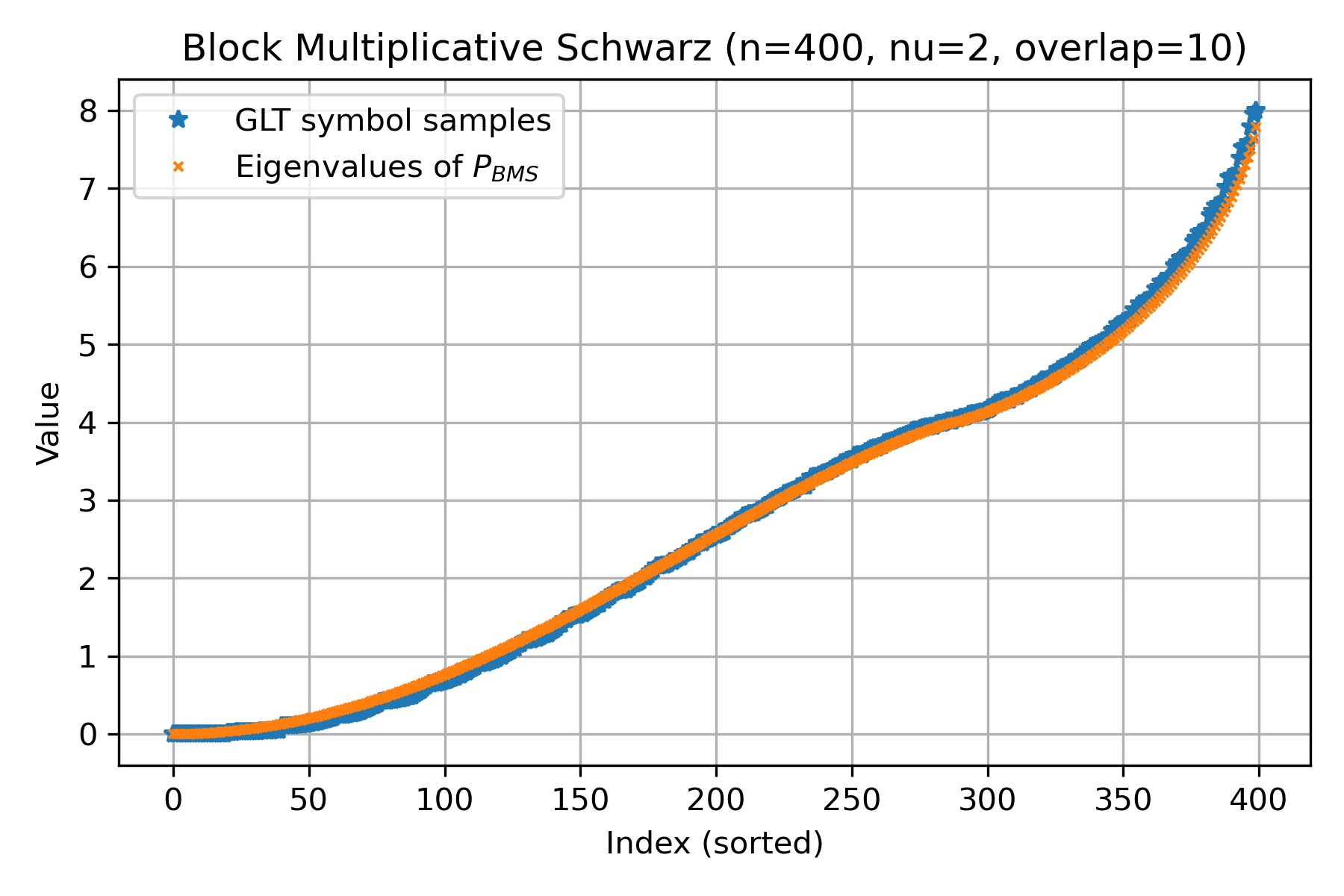}
    \includegraphics[width=0.49\textwidth]{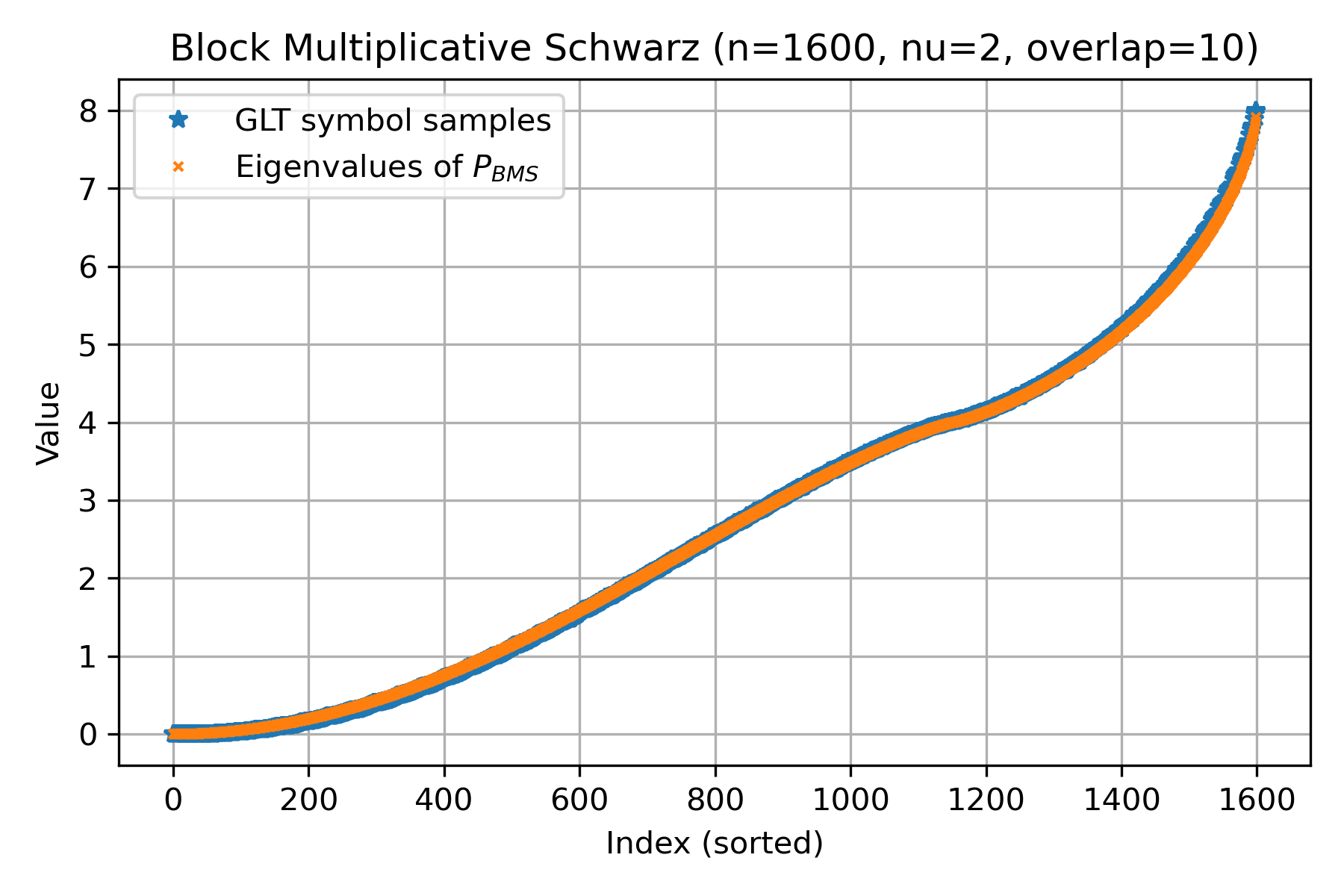}
    \caption{Comparison between the GLT symbol \((1+x^2)\,(2 - 2 \cos \theta)\) and the eigenvalues of the block multiplicative Schwarz preconditioner \(P_{\mathrm{BMS},n}^{(\nu)}\) with overlap $o=10$.
Left: $n=400$, $\nu=2$. Right: $n=1600$, $\nu=2$.}

    \label{Fig: FE 9}
\end{figure}

\begin{figure}[H]
    \centering
    \includegraphics[width=0.49\textwidth]{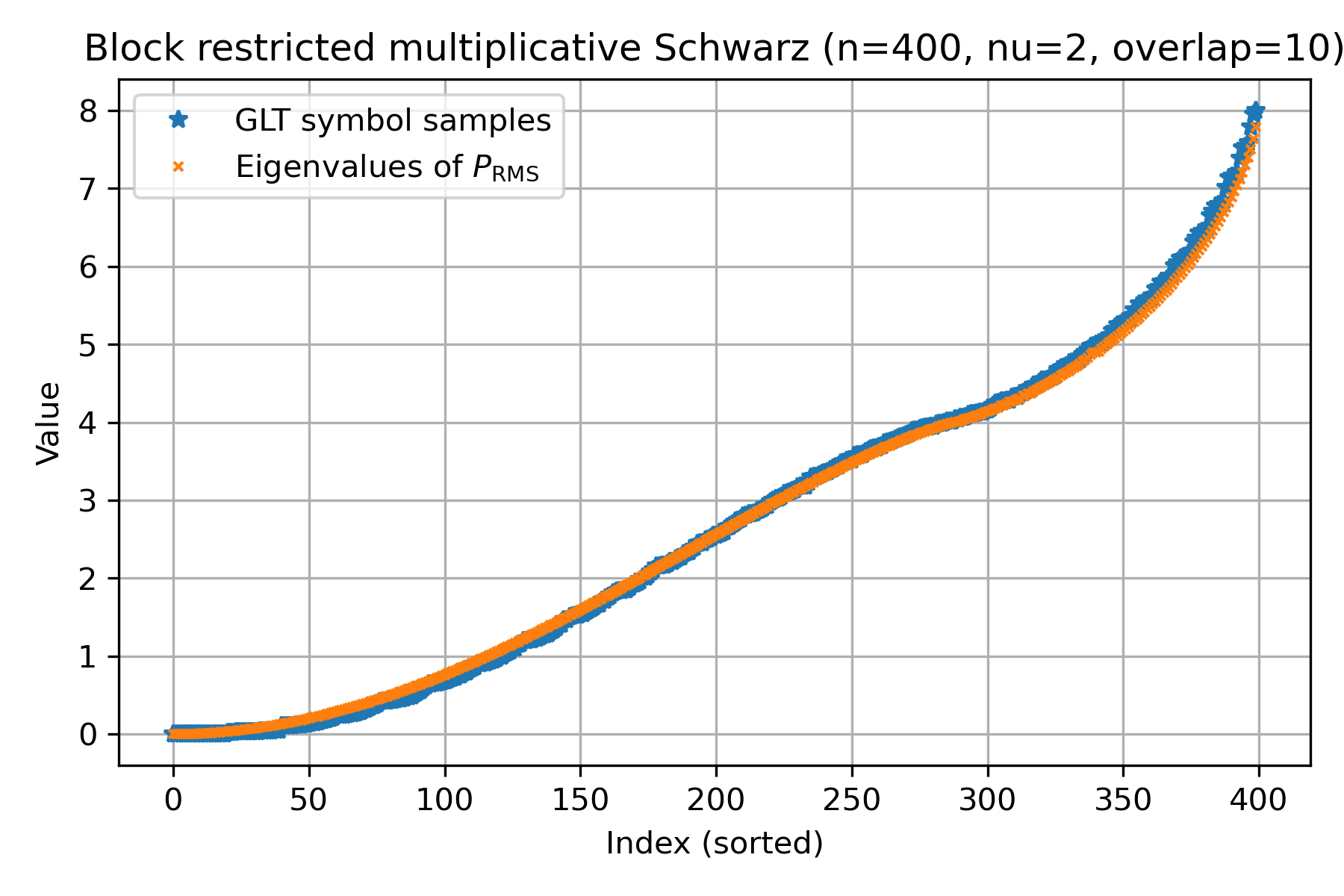}
    \includegraphics[width=0.49\textwidth]{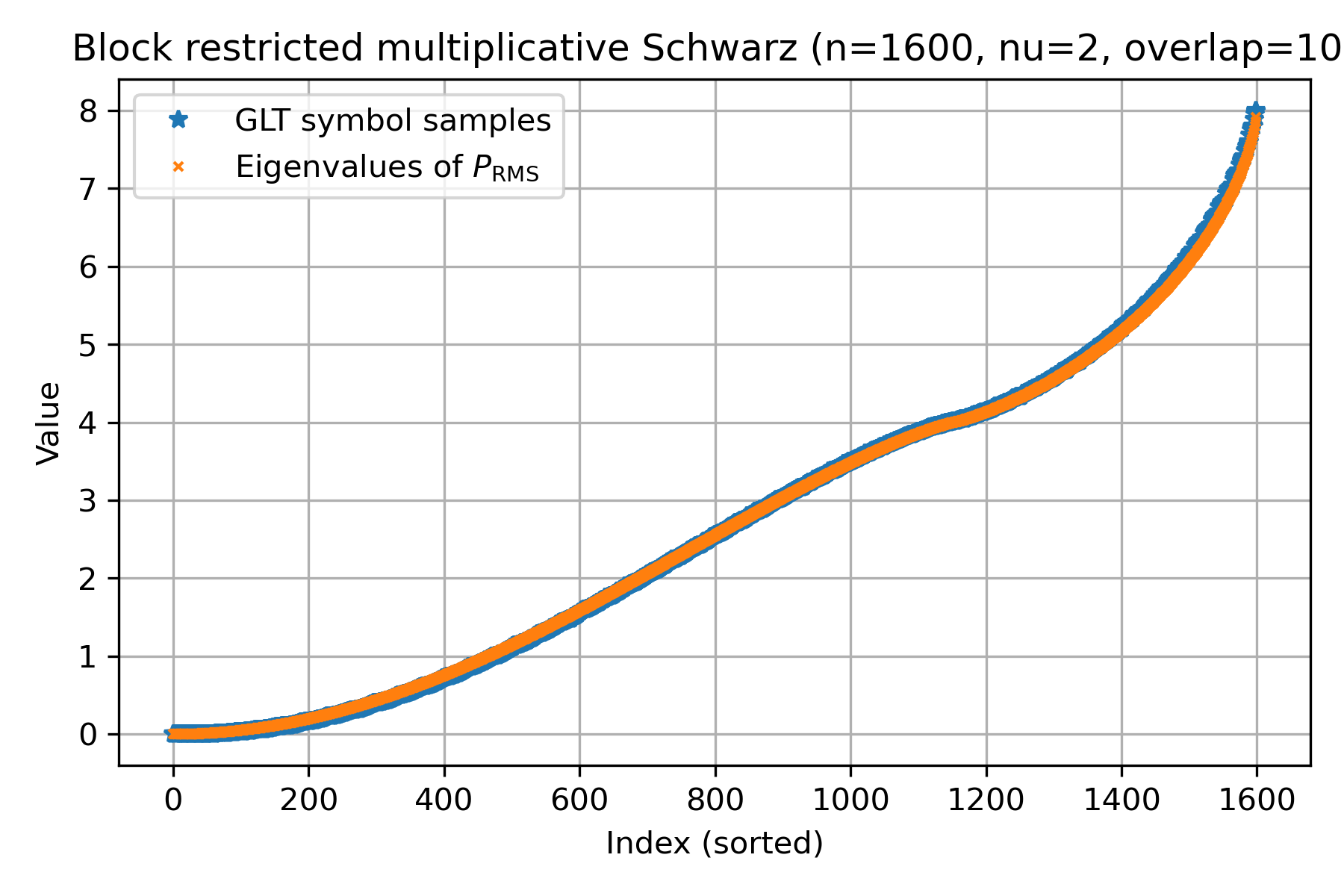}
    \caption{Comparison between the GLT symbol \((1+x^2)\,(2 - 2 \cos \theta)\) and the eigenvalues of the block restricted multiplicative Schwarz preconditioner \(P_{\mathrm{BRMS},n}^{(\nu)}\) with overlap $o=10$.
Left: $n=400$, $\nu=2$. Right: $n=1600$, $\nu=2$.}

    \label{Fig: FE 11}
     \end{figure}

In this case also the number $s$ in item~(A) is given by $s = 1$, we follow the program in item~(B). The results are collected in Tables~\eqref{tab:FEM1}--~\eqref{tab:FEM4}.

\begin{table}[H]
\centering
\resizebox{0.7\textwidth}{!}{%
\begin{tabular}{llccccccc}
\toprule
\textbf{Method} & $\nu$ & $n=40$ & $n=80$ & $n=160$ & $n=320$ & $n=640$ & $n=1280$ & $n=2560$ \\
\midrule

PCG & 2 & 4 & 4 & 4 & 4 & 4 & 4 & 4 \\
 & 4 & 8 & 8 & 8 & 8 & 8 & 8 & 8 \\
 & 8 & 10 & 14 & 16 & 16 & 16 & 17 & 17 \\
 & 16 & nac & 15 & 22 & 28 & 32 & 33 & 33 \\
\midrule

PGMRES & 2 & 4 & 4 & 4 & 4 & 4 & 4 & 4 \\
 & 4 & 8 & 8 & 8 & 8 & 8 & 8 & 13 \\
 & 8 & 15 & 14 & 16 & 23 & 16 & 28 & 17 \\
 & 16 & nac & 17 & 34 & 60 & 120 & 140 & 143 \\
\bottomrule
\end{tabular}}
\caption{Number of iterations for PCG and PGMRES with $P_{BAS, n}^{(\nu )}(K_n)$, and overlap $o=5$.}
\label{tab:FEM1}
\end{table}

\begin{table}[H]
\centering
\resizebox{0.8\textwidth}{!}{%
\begin{tabular}{llccccccc}
\toprule
\textbf{Method} & $\nu$ & $n=40$ & $n=80$ & $n=160$ & $n=320$ & $n=640$ & $n=1280$ & $n=2560$ \\
\midrule

\multirow{4}{*}{PGMRES} & 2 & 2 & 2 & 2 & 2 & 2 & 2 & 3 \\
 & 4 & 4 & 4 & 4 & 4 & 4 & 7 & 8 \\
 & 8 & 5 & 8 & 8 & 8 & 8 & 16 & 9 \\
 & 16 & nac & 12 & 16 & 16 & 16 & 17 & 21 \\
\bottomrule
\end{tabular}}
\caption{Number of iterations for PGMRES with $P_{BMS, n}^{(\nu )}(K_n)$, and overlap $o=5$.}
\label{tab:FEM2}
\end{table}

\begin{table}[H]
\centering
\resizebox{0.8\textwidth}{!}{%
\begin{tabular}{llccccccc}
\toprule
\textbf{Method} & \textbf{Metric} & $n=40$ & $n=80$ & $n=160$ & $n=320$ & $n=640$ & $n=1280$ & $n=2560$ \\

\midrule

\multirow{4}{*}{PGMRES}  & 2 & 3 & 3 & 3 & 3 & 3 & 3 & 3 \\
 & 4 & 7 & 7 & 7 & 7 & 7 & 7 & 14 \\
 & 8 & 16 & 13 & 15 & 15 & 15 & 30 & 16 \\
 & 16 & nac & 29 & 23 & 63 & 104 & 146 & 146 \\

\bottomrule
\end{tabular}}
\caption{Number of iterations for PGMRES with $P_{BRAS, n}^{(\nu )}(K_n)$, and overlap $o=5$.}
\label{tab:FEM3}
\end{table}

\begin{table}[H]
\centering
\resizebox{0.8\textwidth}{!}{%
\begin{tabular}{llccccccc}
\toprule
\textbf{Method} & \textbf{Metric} & $n=40$ & $n=80$ & $n=160$ & $n=320$ & $n=640$ & $n=1280$ & $n=2560$ \\
\midrule

\multirow{4}{*}{PGMRES} & 2 & 2 & 2 & 2 & 2 & 2 & 2 & 4 \\
 & 4 & 4 & 4 & 4 & 4 & 4 & 7 & 8 \\
 & 8 & 10 & 8 & 8 & 8 & 11 & 16 & 9 \\
 & 16 & nac & 17 & 16 & 16 & 18 & 17 & 31 \\

\bottomrule
\end{tabular}}
\caption{Number of iterations for PGMRES with $P_{BRMS, n}^{(\nu )}(K_n)$, and overlap $o=5$.}
\label{tab:FEM4}
\end{table}

\end{example}

\section{Conclusions}\label{sec7:end}

Our main results establish that for any GLT sequence \(\{A_n\}_n \sim_{\mathrm{GLT}} \kappa\), any block additive or multiplicative preconditioning matrix sequence  \(\{P_n\}_n\) also forms a GLT sequence with the same symbol \(\kappa\), with or without overlap and provided that the number of blocks \(\nu\) remains fixed independently of \(n\). 
Numerical results and visualizations are reported confirming the theoretical findings. A corresponding theoretical study for the preconditioned matrix sequences is given in twin paper \cite{twin}, together with new directions stemming also from the results of the present work.

\end{document}